\documentclass[]{interact}

\usepackage{epstopdf}
\usepackage[caption=false]{subfig}
\usepackage{float} 

\usepackage[numbers,sort&compress]{natbib}
\bibpunct[, ]{[}{]}{,}{n}{,}{,}
\makeatletter
\def\NAT@def@citea{\def\@citea{\NAT@separator}}
\makeatother
\usepackage{xcolor}

\theoremstyle{plain}
\newtheorem{theorem}{Theorem}[section]

\newtheorem{proposition}[theorem]{Proposition}
\theoremstyle{definition}

\theoremstyle{remark}
\newtheorem{remark}{Remark}

\newcommand{\R}{\mathbb{R}}
\newcommand{\Om}{\Omega}

\begin{document}


\title{Spatially Limited Immune Access Creates Tumour Refugia in Hepatocellular Carcinoma}

\author{
\name{Jiguang Yu\textsuperscript{a}\thanks{This author contributed equally to this work. Email: jyu678@bu.edu} and Louis Shuo Wang\textsuperscript{b}\thanks{This author contributed equally to this work. CONTACT Louis Shuo Wang. Email: wang.s41@northeastern.edu}}
\affil{\textsuperscript{a}College of Engineering, Boston University, Boston, 02215, MA, USA; \textsuperscript{b}Department of Mathematics, Northeastern University, Boston, 02115, MA, USA}
}

\maketitle

\begin{abstract}
Spatially restricted immune access can undermine tumour control even when total immune recruitment appears sufficient. We develop a nondimensional reaction--diffusion--chemotaxis model for hepatocellular carcinoma that couples tumour growth and immune-mediated killing with effector diffusion, saturating chemokine-dependent recruitment, and migration along an effective CXCL9/CXCL10/CXCL11--CXCR3 signal. We establish nonnegativity, uniform tumour and mass bounds, and global boundedness of classical solutions in one dimension and, in arbitrary dimensions, when chemokine production is tumour-driven. Analysis of the homogeneous dynamics shows that the threshold $\sigma_0>\delta$ is only local and does not preclude bistable tumour persistence. A mode-wise dispersion relation identifies stationary finite-wavelength and oscillatory instabilities, whose critical sensitivities, dominant modes, growth rates, and frequencies are quantitatively recovered by conservative finite-volume simulations. Using matched initial states and identical mean recruitment, we show that a well-mixed model may predict clearance while margin-limited recruitment preserves a stable interior tumour refuge. Chemotaxis improves interior effector access and reduces, but need not eliminate, this refuge. Spatial-overlap biomarkers further connect model-generated phenotypes to pathology and spatial-omics observables. These results distinguish immune abundance from effective spatial access and identify access-limited recruitment as a mechanism of incomplete tumour control.
\end{abstract}

\begin{keywords}
hepatocellular carcinoma; tumour--immune microenvironment; reaction--diffusion--chemotaxis; spatial immune access; tumour refugia.
\end{keywords}

\begin{amscode}
35K57; 92C17; 35B36; 65M08; 92C50
\end{amscode}

\section{Introduction}\label{sec:introduction}

Hepatocellular carcinoma (HCC) is governed not only by the proliferative
capacity of malignant hepatocytes but also by the spatial organization of the tumour microenvironment (TME) in which they reside
\cite{bray2024global,llovet2022immunotherapies,bruni2020immune}. Cancer
progression more generally emerges from coupled processes operating across
molecular, cellular, and tissue scales
\cite{beeghly2023measuring,desoyer2025computational}. Cytokines, chemokines, and metabolic signals regulate cell division, survival, migration, and immune activation \cite{tiwari2025molecular}, while tumour, immune, stromal, and endothelial populations interact through direct contact and diffusible mediators \cite{kaminska2015role,joshi2026dynamic,he2022extracellular}. These interactions generate tissue-scale heterogeneity in tumour expansion, immune infiltration and exclusion, necrosis, invasion fronts, and treatment response \cite{giuliani2025immune,liang2026separation,biswas2022inference}. Tumour progression is therefore not simply a temporal growth process: local spatial interactions can determine macroscopic disease states and therapeutic outcomes.

This multiscale view has made mathematical and computational oncology an important framework for connecting mechanistic hypotheses with observable tumour behaviour. Quantitative cancer models have evolved from low-dimensional population descriptions toward integrative, spatial, and multiscale approaches for studying tumour initiation, progression, heterogeneity, and treatment response
\cite{anderson2008integrative,altrock2015mathematics,yu2026microscopic,mcdonald2023computational,gopukumar2026multiscale}. In parallel, community roadmaps have emphasized reproducible quantitative cancer modelling \cite{rockne20192019,wang2025multi}, and emerging mechanistic-learning
frameworks seek to combine interpretable dynamics with clinical and data-driven
inference \cite{metzcar2024review,lan2025shallow,liu2025bidirectional,de2025radiation,laslo2025mechanistic}. Within this programme, spatial models are especially valuable because they connect cell--signal interactions with tissue architecture rather than representing the TME only through spatial averages.

Classical tumour--immune models provide the kinetic foundation for this work. Since the ordinary-differential-equation model of Kuznetsov~et~al. \cite{kuznetsov1994nonlinear,gao2022rolling}, nonlinear systems have been used to investigate tumour dormancy, immune escape, recurrence, immune recruitment, and immunotherapy
\cite{kirschner1998modeling,de2001mathematical,mehdizadeh2023targeting,yu2026beyond,barrera2025impact}; see also the synthesis of Eftimie~et~al.~\cite{eftimie2011interactions,yu2026physics}. Such well-mixed descriptions clarify the temporal competition between tumour growth and immune-mediated elimination and yield useful kinetic thresholds for tumour control. Their limitation is structural: after tissue architecture is averaged out, an immune-poor tumour is indistinguishable from a tumour containing many immune cells that remain spatially separated from malignant cells. This distinction is increasingly important because the biological consequences of immune abundance depend not only on cell number, but also on localization and accessibility.

This distinction is particularly important in HCC, where immune evasion is spatially organized. Cytotoxic cells may be abundant at the tissue scale or enriched near a tumour boundary while failing to penetrate tumour-dense regions. Such mislocalization can produce immune-cold or immune-excluded compartments even when bulk measurements indicate substantial immune abundance \cite{galon2006type,yu2026age,mariathasan2018tgfbeta,liu2023identification}.
The density, localization, and organization of tumour-infiltrating lymphocytes are associated with tumour phenotype and clinical outcome \cite{lopez2025biological,yu2026pattern,brummel2023tumour}, motivating the classification of tumours into immune-hot, altered, and cold states \cite{galon2019approaches}. In immune exclusion, effector T cells are present within the TME but remain separated from tumour cells by stromal, extracellular matrix, or other microenvironmental barriers \cite{joyce2015t,yu2024extracellular,wang2026algebraic,bruni2023cancer}. Accordingly, tumour control depends not only on whether immune killing can outpace malignant proliferation, but also on whether cytotoxic cells can reach, persist within, and overlap tumour-rich regions.

Spatial mathematical models provide a natural means of resolving this access problem. Reaction--diffusion equations describe proliferation, death, signal production and degradation, and passive spatial spreading \cite{kondo2010reaction}; taxis terms describe directed cell movement in response to chemical gradients and thereby convert local signalling into tissue-scale redistribution. This is especially relevant for immune cells responding to chemokine fields in the TME \cite{mempel2024chemokines}. Reaction--diffusion--chemotaxis systems can thus connect molecular signalling and cell motility to tumour invasion, immune infiltration, immune exclusion, coexistence, and pattern formation
\cite{matzavinos2004mathematical,tao2024global,wang2026elliptic,li2026global,ai2015reaction,ke2022analysis,kiselev2022chemotaxis}. Related spatial models have addressed tumour-induced angiogenesis \cite{yu2026rigorous,hadjigeorgiou2024hybrid,anderson1998continuous,agosti2023image},
avascular tumour growth \cite{wang2026breakdown,roose2007mathematical,ferreira2002reaction,chaplain1996avascular},
and cytotoxic T-lymphocyte responses to solid tumours \cite{matzavinos2004mathematical}. Together, these studies show that diffusible signals and directed migration can generate heterogeneous infiltration, travelling structures, and irregular tumour distributions that homogeneous models cannot represent. Continuum formulations can also be complemented by
agent-based and hybrid approaches, including PhysiCell models of multicellular
tumour dynamics \cite{ghaffarizadeh2018physicell,cai2026optimal,savic2022heterogeneous} and
coupled PDE--agent-based models resolving interactions among hypoxia, angiogenesis, vascular heterogeneity, and treatment resistance \cite{wang2025analysis1}.

Chemokine-mediated recruitment and migration provide a direct mechanism for spatially heterogeneous immune access. Chemokines govern the recruitment, positioning, and retention of immune effector cells within tissues \cite{nagarsheth2017chemokines,chaudhary2025role,liang2025global,foeng2022harnessing}.
In HCC and other solid tumours, the CXCL9/CXCL10/CXCL11--CXCR3 axis is associated with type-I immune activation and effector-cell trafficking \cite{reschke2022cxcl9}. We represent this axis through the lumped tissue-scale field
\[
w(x,t)=\text{effective CXCR3-ligand chemokine activity},
\]
which aggregates CXCL9, CXCL10, and CXCL11 rather than resolving their individual kinetics. This abstraction isolates two effects of the chemokine field that are biologically related but mathematically distinct: it increases local effector recruitment through a saturating response and directs existing effectors up chemokine gradients through chemotaxis. Recruitment changes the local supply of immune cells; chemotaxis redistributes that supply. Separating these mechanisms is central to determining whether sufficient total immune input produces effective access to tumour-rich tissue.

The increasing availability of spatially resolved experimental measurements makes this distinction empirically relevant. Multiplex imaging, spatial transcriptomics, and modern computational pathology provide maps of tumour regions, immune abundance, cell--cell proximity, exclusion patterns, and signalling activity
\cite{rao2021exploring,yu2026full,moffitt2022emerging,palla2022spatial}. Such measurements reveal spatial phenotypes but do not, by themselves, determine the dynamical mechanisms that generate them. A mechanistic model can complement such data by representing tumour growth, immune recruitment, chemokine production, directed migration, and restricted access as explicit processes. Conversely, overlap- and threshold-based summaries calculated from model fields can be compared with segmented histology or spatial-omics maps. This provides a route from observed immune-hot, immune-cold, or immune-excluded states to experimentally testable mechanistic hypotheses.

The resulting model belongs to the Keller--Segel class of chemotaxis systems \cite{keller1970initiation,hillen2009user,wang2025analysis,horstmann20031970}.
Because chemotactic aggregation can generate steep gradients and, in some regimes, loss of boundedness, the balance between taxis, diffusion, reaction, and logistic damping is a central analytical issue \cite{winkler2010aggregation,tao2012boundedness,wang2026damage,arumugam2021keller}.
Recent studies demonstrate how signal coupling and nonlinear response determine global existence, boundedness, and pattern formation in related systems \cite{tao2024global,wang2026elliptic,li2026global,ai2015reaction,ke2022analysis,kiselev2022chemotaxis}. Such control is essential here because a mathematically uncontrolled chemotactic amplification would undermine the interpretation of computed densities as physically meaningful cell populations. Chemotactic concentration also creates a numerical challenge, motivating conservative finite-volume methods, positivity-preserving central-upwind schemes, and conservative upwind finite-element formulations \cite{filbet2006finite,chertock2008second,yu2026size,saito2007conservative}. These considerations motivate the conservative finite-volume treatment adopted here, together with upwind chemotactic fluxes and discrete positivity and residual diagnostics.

The mechanistic foundation of the present study is the nondimensional tumour--immune--chemokine reaction--diffusion--chemotaxis model developed in our previous work \cite{liu2026computational}. That study established the core local interaction structure and its initial equilibrium, stability, and numerical framework. Here we retain that structure but address a different question: whether spatially heterogeneous immune supply and transport can reverse a treatment conclusion obtained from a well-mixed description. Our central hypothesis is that \emph{spatially limited immune access can preserve tumour refugia that are invisible to a well-mixed recruitment model, whereas chemokine-guided migration can reduce, but need not eliminate, those refugia}. The analysis therefore treats immune recruitment, directed migration, and tumour--immune overlap as distinct determinants of tumour control.

The present study makes four principal contributions. First, we strengthen the analytical foundation of the model by establishing preservation of nonnegativity, a uniform tumour bound, and global existence with uniform boundedness of classical solutions in one space dimension and, in arbitrary dimension, when chemokine production is tumour-driven ($\gamma=0$). We also show that the well-mixed condition $\sigma_0>\delta$ is a local tumour-free stability threshold rather than a global clearance criterion: bistability can preserve a tumour-positive attractor for sufficiently large initial burden. Second, we resolve stationary and oscillatory routes to spatial instability through a mode-wise cubic dispersion relation. Eigenvector-seeded simulations quantitatively validate the unstable wavelengths, growth rates, onset thresholds, and the complex-pair frequency of the oscillatory route, linking linear predictions to stationary hot--cold domains and time-dependent immune-access waves.

Third, we introduce spatially restricted immune recruitment and construct a matched PDE--ODE comparison in which the averaged initial state, total immune supply, and kinetic parameters are held fixed. This controlled comparison reveals a recruitment--access mismatch: the well-mixed system clears the tumour, whereas the spatial system retains a stable interior refuge. Parameter mapping shows that the mismatch persists over an extended intervention region, and sensitivity experiments demonstrate that chemotaxis partially restores interior access without necessarily eliminating the refuge. Boundary-driven chemokine delivery provides a complementary negative control and does not robustly reproduce the refuge mechanism. Fourth, we define immune-hot, immune-cold, tumour--immune exclusion, and chemokine--access mismatch
biomarkers that translate model fields into quantities comparable, in principle, with pathology and spatial-omics maps. Together, these results connect mathematical well-posedness, homogeneous and spatial stability, verified computation, treatment response, and experimentally observable TME heterogeneity within a single mechanistic framework.

The remainder of the paper is organized as follows. Section~\ref{sec:methods} formulates the core and extended tumour--immune--chemokine models, introduces the spatial recruitment and chemokine-delivery protocols, defines the matched well-mixed comparator and spatial biomarkers, and summarizes the analytical and numerical procedures. Section~\ref{sec:results} presents the well-posedness and equilibrium results, examines bistability and the limitations of the well-mixed clearance threshold, validates the stationary and oscillatory spatial-instability mechanisms, and quantifies the emergence and mitigation of access-limited tumour refugia. Section~\ref{sec:discussion} interprets the mathematical and biological implications, relates the model outputs to pathology and spatial-omics measurements, and discusses limitations and future extensions. The supporting proofs, equilibrium derivations, numerical implementation and convergence diagnostics, and parameter definitions are provided in Appendices~\ref{app:analysis}--\ref{app:parameters}.

\section{Materials and Methods}\label{sec:methods}

We model a bounded tissue region $\Om\subset\R^n$, $n=1,2$, over $t\ge0$. The
primary variables are the tumor-cell density $u(x,t)$, the cytotoxic
effector-cell density $v(x,t)$, and the effective chemokine activity $w(x,t)$;
an optional suppressive TME signal $s(x,t)$ appears in the extended model
(Table~\ref{tab:variables_tme}). Their biological meanings and the
corresponding spatial observables are summarized below.

\begin{table}[htbp]
\centering
\caption{Model variables and biological interpretation.}
\label{tab:variables_tme}
\renewcommand{\arraystretch}{1.2}
\begin{tabular}{lll}
\toprule
\textbf{Variable} & \textbf{Meaning} & \textbf{Biological interpretation}\\
\midrule
$u(x,t)$ & Tumor-cell density & HCC malignant-cell burden\\
$v(x,t)$ & Effector immune-cell density & CD8$^+$ T / NK-like cytotoxic activity\\
$w(x,t)$ & Chemokine signal & Effective CXCL9/10/11--CXCR3 recruitment axis\\
$s(x,t)$ & Suppressive TME signal & TGF-$\beta$, PD-L1/CD73, Tregs, MDSCs\\
\bottomrule
\end{tabular}
\end{table}

The baseline mechanistic model used in this study is the
nondimensional tumour--immune--chemokine system introduced in our
previous work~\cite{liu2026computational}. We reproduce the system
here for completeness, because the present study uses it as the
baseline model for a different set of questions concerning spatial
accessibility, intervention protocols, and spatially resolved
biomarkers:
\begin{equation}
\label{eq:core_pde}
\begin{aligned}
u_t &= d_1\Delta u+u(1-u-v),\\
v_t &= d_2\Delta v-\xi\nabla\cdot(v\nabla w)
+\sigma_0+\sigma_1\frac{w}{1+w}-\delta v-\beta uv,\\
w_t &= d_3\Delta w+\alpha u+\gamma uv-\ell w,
\end{aligned}
\end{equation}
with $d_1,d_2,d_3>0$, $\xi\ge0$, $\sigma_0,\sigma_1,\beta,\alpha,\gamma\ge0$,
and $\delta,\ell>0$. Here $u(1-u-v)$ represents logistic tumor growth
with immune-mediated killing $-uv$; the chemotactic flux
$-\xi\nabla\cdot(v\nabla w)$ models directed effector migration up chemokine
gradients; $\sigma_0$ is baseline recruitment and
$\sigma_1 w/(1+w)$ is saturating chemokine-dependent recruitment;
$\alpha u+\gamma uv$ describes tumor- and interaction-induced chemokine
production; and $-\ell w$ accounts for chemokine clearance. The model is
nondimensional. To incorporate additional TME-mediated suppression without
altering the structure of the core system, we also consider the extended model
\begin{equation}
\label{eq:extended_pde}
\begin{aligned}
u_t &= d_1\Delta u+u(1-u-v),\\
v_t &= d_2\Delta v-\nabla\cdot\!\big(\chi(w)v\nabla w\big)
+\sigma_0+\sigma_1\tfrac{w}{1+w}-\delta v-\beta uv-\eta sv,\\
w_t &= d_3\Delta w+\alpha u+\gamma uv-\ell w,\\
s_t &= d_4\Delta s+a_su+b_suv-\ell_s s,
\end{aligned}
\end{equation}
with $\chi(w)=\xi$ or the saturating form
$\chi(w)=\xi/(1+w)^m$, $m\ge0$. The three-variable model
\eqref{eq:core_pde} is used for the main results.

Unless stated otherwise we impose homogeneous no-flux boundary conditions,
\begin{equation}
\label{eq:noflux_core}
\nabla u\cdot n=\nabla v\cdot n=\nabla w\cdot n=0,\qquad x\in\partial\Om,
\end{equation}
with nonnegative initial data $u_0,v_0,w_0\ge0$. Since $\nabla w\cdot n=0$, the
total immune-cell flux $J_v=-d_2\nabla v+\xi v\nabla w$ also satisfies
$J_v\cdot n=0$; thus the imposed conditions imply no net tumor-cell,
immune-cell, or chemokine flux through the tissue boundary. These conditions
define the baseline closed-tissue setting, after which we introduce
intervention protocols that modify immune or chemokine supply while leaving the
underlying transport and reaction mechanisms unchanged.

For uniform immune-recruitment escalation, we use a time-dependent baseline,
$\sigma_0(t)=\sigma_{0,\min}+(\sigma_{0,\max}-\sigma_{0,\min}),t/T$ for
$0\le t\le T$, then held fixed. To represent recruitment entering through
vascularized tissue margins rather than uniformly, we instead use the spatially
fixed profile
\begin{equation}
\label{eq:sigma0_margin}
\sigma_0(x)=
\begin{cases}
\sigma_0^{\mathrm{hi}}, & \operatorname{dist}(x,\partial\Om)\le r_m,\\
0, & \operatorname{dist}(x,\partial\Om)>r_m,
\end{cases}
\end{equation}
with margin width $r_m>0$ and margin level $\sigma_0^{\mathrm{hi}}>0$. The
chemokine field and chemotaxis are unchanged; only the supply of effectors is
spatially localized. Its spatial mean
$\overline{\sigma_0}=|\Om|^{-1}\int_\Om\sigma_0(x)\,dx$ is the value supplied to
the well-mixed comparator. As a complementary intervention, a spatially
nonuniform chemokine delivery is modelled by an inhomogeneous Dirichlet
condition $w|_{\partial\Om}=B_w(t)$ or a flux condition
$-d_3\nabla w\cdot n=q_w(t)$. These protocols allow us to separate changes in
the total amount of immune or chemokine supply from the spatial accessibility
through which that supply acts.

To isolate the effect of spatial structure, we compare \eqref{eq:core_pde} with
the corresponding well-mixed system
\begin{equation}
\label{eq:ode_comparator}
\dot u = u(1-u-v),\quad
\dot v = \sigma_0+\sigma_1\tfrac{w}{1+w}-\delta v-\beta uv,\quad
\dot w = \alpha u+\gamma uv-\ell w.
\end{equation}
The comparison is constructed so that differences in outcome cannot be
attributed to different initial conditions or total parameter levels. Thus,
the ODE comparator and the PDE are always initialized from the \emph{same}
state: the ODE is started from the spatial average of the PDE initial data,
$(\overline{u_0},\overline{v_0},\overline{w_0})
=|\Om|^{-1}\int_\Om(u_0,v_0,w_0)\,dx$, and integrated with the spatial average
of any spatially varying parameter (e.g.\ $\overline{\sigma_0}$). Any difference
in predicted outcome is therefore attributable to spatial structure and
transport, rather than to a difference in initial tumor burden, total immune
supply, or kinetic parameters. This construction is used throughout when
assessing whether spatially limited access produces outcomes that are not
predicted by the well-mixed description.

The analytical and numerical procedures are designed to connect these model
comparisons with the underlying stability and pattern-forming mechanisms. We
prove positivity, a priori bounds, and global existence with uniform
boundedness (Appendix~\ref{app:analysis}), characterize the homogeneous
equilibria (Appendix~\ref{app:equilibria}), and derive the linear stability of
the tumor-free equilibrium and the coexistence dispersion relation. For each
Neumann eigenvalue $\mu_k$ the linearized mode matrix yields a cubic
$P_k(\lambda)=\lambda^3+a_1\lambda^2+a_2\lambda+a_3$; Routh--Hurwitz gives the
stationary route $a_3(\mu_k)<0$ and the oscillatory route
$a_1(\mu_k)a_2(\mu_k)<a_3(\mu_k)$ with $a_1,a_2,a_3>0$
(Appendix~\ref{app:analysis}). Simulations use a conservative finite-volume
scheme with first-order upwind chemotaxis flux in 1D, and an IMEX scheme in 2D
(implicit diffusion via the discrete cosine transform; explicit upwind
chemotaxis and reaction); see Appendix~\ref{app:numerics}. Positivity and
grid-refinement diagnostics are recorded for every run. Together, the analytical and
computational procedures provide both qualitative control of the model and
quantitative verification of its spatial instability and treatment responses.

The resulting spatial fields are intended not only as state variables for the
mechanistic analysis but also as synthetic representations of experimentally
observable TME maps. The model variables and derived biomarkers therefore map
onto measurable pathology and spatial-omics layers (Table~\ref{tab:model_data_mapping}).

\begin{table}[htbp]
\centering
\caption{Model-generated spatial layers and possible pathology or spatial-omics
analogues.}
\label{tab:model_data_mapping}
\renewcommand{\arraystretch}{1.25}
\begin{tabular}{p{0.16\textwidth}p{0.37\textwidth}p{0.37\textwidth}}
\toprule
\textbf{Model layer} & \textbf{Spatial data analogue} & \textbf{Mechanistic interpretation}\\
\midrule
$u(x,t)$ &
Tumor-cell or tumor-region probability map from H\&E, multiplex imaging, or
tumor-marker segmentation &
Spatial distribution of malignant HCC burden.\\
$v(x,t)$ &
Cytotoxic immune-cell density from CD8, granzyme-B, NK-cell, or related markers &
Spatial distribution of effector immune access.\\
$w(x,t)$ &
Spatial CXCL9/CXCL10/CXCL11--CXCR3 pathway activity from spatial transcriptomics
or pathway-inference maps &
Effector-attracting chemokine activity and directional recruitment signal.\\
$s(x,t)$ &
Suppressive pathway activity (TGF-$\beta$, CD274/PD-L1, NT5E/CD73, Treg, myeloid,
or macrophage-associated signatures) &
Spatially organized immunosuppressive TME activity.\\
$H_{\rm hot},H_{\rm cold}$ &
Hot/cold tumor-region fractions from segmented tumor and immune maps &
Threshold-dependent summaries of local tumor--immune overlap.\\
$I_{\rm excl}$ &
Immune-exclusion score from tumor and immune spatial maps &
Threshold-free measure of tumor--immune spatial separation.\\
$M_{\rm chem}$ &
Mismatch between chemokine/pathway activity and immune infiltration &
Indicator of chemokine-positive but immune-poor tumor regions.\\
\bottomrule
\end{tabular}
\end{table}

With tumor and immune thresholds $\theta_u,\theta_v>0$, we define the immune-hot
and immune-cold fractions of tumor-positive tissue by
\begin{equation}
H_{\rm hot}=\frac{|{u>\theta_u,\,v>\theta_v}|}{|{u>\theta_u}|},\qquad
H_{\rm cold}=\frac{|{u>\theta_u,\,v\le\theta_v}|}{|{u>\theta_u}|},
\end{equation}
and introduce the threshold-free indices
\begin{equation}
I_{\rm excl}=1-\frac{\int_\Om uv\,dx}{\|u\|_{L^2}\|v\|_{L^2}},\qquad
M_{\rm chem}=\frac{\int_\Om uw\,dx}{\|u\|_{L^2}\|w\|_{L^2}}
-\frac{\int_\Om uv,dx}{\|u\|_{L^2}\|v\|_{L^2}}.
\end{equation}
Large $I_{\rm excl}$ indicates spatial separation of tumor and effector cells;
positive $M_{\rm chem}$ indicates tumor regions co-localize more with chemokine
than with effectors (a chemokine--access mismatch).

The threshold-dependent fractions $H_{\rm hot}$ and $H_{\rm cold}$ serve as
interpretable phenotype summaries, analogous to classifying segmented tissue
regions as immune-infiltrated or immune-poor; because their values depend on the
chosen thresholds, the thresholds are reported explicitly for every simulation.
In contrast, $I_{\rm excl}$ and $M_{\rm chem}$ are threshold-free overlap
measures, and we use them as the primary quantitative biomarkers for comparing
model-generated spatial states with pathology or spatial-omics maps. In this
way, the same mathematical framework links the biological variables and
intervention protocols to analytical stability properties, numerical
experiments, and spatially resolved observables.

\section{Results}\label{sec:results}

For nonnegative initial data the core model preserves nonnegativity,
$u,v,w\ge0$, and the tumor density obeys
$0\le u(x,t)\le\max\{1,\|u_0\|_{L^\infty}\}$
(Appendix~\ref{app:analysis}, Propositions~\ref{app:prop:positivity}--%
\ref{app:prop:u_bound}). Moreover, the classical solution exists globally and is
uniformly bounded: unconditionally in one space dimension
(Theorem~\ref{app:thm:1d}), and in every dimension when chemokine production is
tumor-driven ($\gamma=0$, Proposition~\ref{app:prop:gamma0}); in two dimensions
with $\gamma>0$, global existence holds together with the a priori bounds under a smaller chemotactic sensitivity. The
extended model preserves nonnegativity of $s$ as well. These guarantees provide
the analytical foundation for interpreting the solutions as biologically
meaningful TME dynamics and are consistent with the numerical positivity
diagnostics reported below.

Having established that the model remains in the biologically admissible
state space, we first examine the corresponding well-mixed dynamics to
determine what can and cannot be inferred from population-level immune
recruitment. The tumor-free equilibrium is $E_0=(0,\sigma_0/\delta,0)$, and
linearization shows it is locally asymptotically stable iff
$\sigma_0>\delta$: baseline recruitment must exceed immune turnover. This is a
local condition. Integrating the well-mixed system in a bistable regime
($\beta=2,\sigma_1=0.2,\alpha=0.5,\gamma=0.5,\delta=0.5,\ell=1$) with
$\sigma_0=0.58>\delta=0.5$, the scalar coexistence equation $F(u_*)=0$ has two
interior roots, $u_*\approx0.068$ (a saddle) and $u_*\approx0.640$ (a stable
coexistence state), so $E_0$ coexists with a tumor-positive attractor
(Fig.~\ref{fig:bistability}B). Trajectories started near $E_0$ ($u_0=0.02$)
clear ($u(t_{\rm end})=\mathcal O(10^{-14})$), whereas moderate or established
tumors ($u_0=0.3,0.6$) converge to $u_*\approx0.640$
(Fig.~\ref{fig:bistability}A). Thus $\sigma_0>\delta$ guarantees only local
clearance of small perturbations; the outcome from an established tumor
depends on the initial burden. Accordingly, when we report ODE clearance below
we verify it numerically from the same averaged initial state used for the PDE,
rather than inferring it from the threshold.

\begin{figure}[htbp]
\centering
\includegraphics[width=0.95\textwidth]{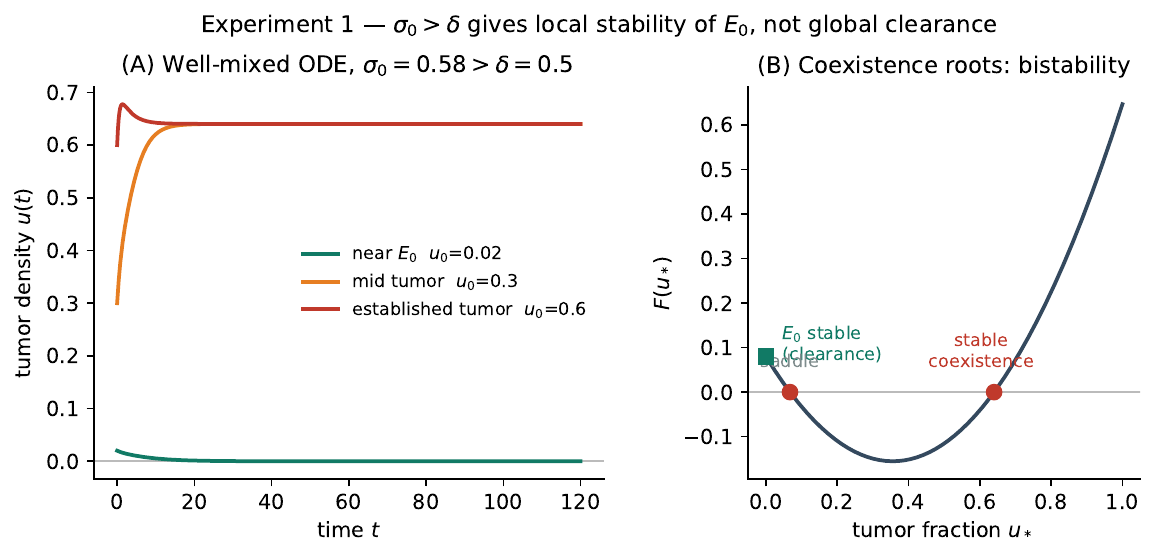}
\caption{Well-mixed bistability despite local stability of $E_0$. (A) ODE trajectories for $\sigma_0=0.58>\delta=0.5$: the near-$E_0$ start clears; moderate and established tumors converge to the coexistence state $u_*\approx0.640$. (B) $F(u_*)$ showing the stable $E_0$, a saddle root $u_*\approx0.068$, and a stable coexistence root $u_*\approx0.640$.\label{fig:bistability}}
\end{figure}

The limitations of the local well-mixed threshold become more pronounced once
spatial degrees of freedom are retained. At the coexistence equilibrium
$E_*\approx(0.760,0.240,0.259)$ of the regime
$\sigma_0=0.2,\sigma_1=0.5,\alpha=0.1,\gamma=1,\beta=1,\delta=0.5,\ell=1,
d_1=0.01,d_2=0.1,d_3=1$, $\xi=20$ on $\Om=(0,10)$, the predicted growth rate
$\lambda_k^{\rm pred}=\max_j\operatorname{Re}\lambda_j(M_k)$ is negative for
$k=0$ and positive for a finite band $k=4,\dots,10$, peaking at $k=7$ with
$\lambda_7^{\rm pred}\approx0.484$; high modes restabilize, consistent with the
cubic dominance $a_3(\mu_k)\sim d_1d_2d_3\mu_k^3$, so the wavelength is intrinsic
rather than mesh-determined. Two checks confirm the relation: the spectrum of
the discrete finite-volume Jacobian at $E_*$ matches the analytic per-mode
eigenvalues to four figures (global maximal rate $0.4839$ discrete vs.\ $0.4839$
analytic); and eigenvector-seeded single-mode simulations recover
$\lambda_k^{\rm num}$ to within $3\times10^{-4}$ of
$\lambda_k^{\rm pred}$ across the band (Fig.~\ref{fig:dispersion}A,B). From
random perturbations the nonlinear PDE saturates into persistent hot--cold
domains at the predicted wavenumber (Fig.~\ref{fig:dispersion}C). The
stationary route is thus quantitatively predictive, showing that the spatial
model does not merely introduce additional degrees of freedom but produces a
specific, analytically identifiable instability.

\begin{figure}[htbp]
\centering
\includegraphics[width=0.98\textwidth]{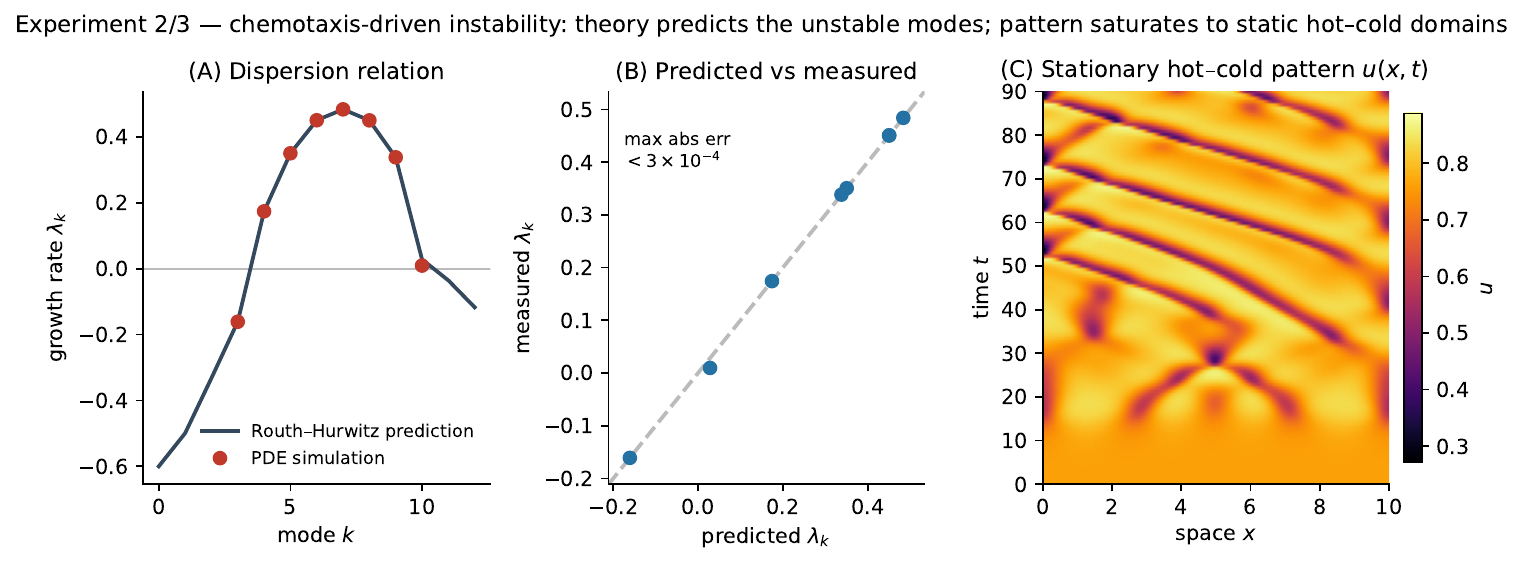}
\caption{Stationary dispersion-relation verification. (A) Analytic dispersion relation with simulation-measured growth rates (markers). (B) Predicted vs.\ measured rates (max absolute error $<3\times10^{-4}$). (C) Nonlinear saturation into persistent hot--cold domains at the fastest-growing wavenumber.\label{fig:dispersion}}
\end{figure}

Beyond individual growth rates, the same dispersion relation predicts the
\emph{onset} of patterning. Writing
$\Lambda(\xi)=\max_{k\ge1}\lambda_k^{\mathrm{pred}}$ for the maximal growth rate
over inhomogeneous modes, the coexistence state is spatially stable while
$\Lambda(\xi)<0$ and unstable once $\Lambda(\xi)>0$; in the regime above this
gives a critical sensitivity $\xi_c\approx16.63$. Simulations started from
small random perturbations confirm this threshold sharply: the final pattern
amplitude remains at the noise level
($\mathrm{std}(u)\lesssim10^{-9}$) for $\xi\le15$ and rises abruptly to
$\mathcal O(10^{-2})$ once $\xi$ exceeds $\xi_c$
(Fig.~\ref{fig:threshold}). The analytically predicted instability boundary
therefore coincides with the numerically observed onset of hot--cold patterning,
providing a direct quantitative connection between the linear theory and the
nonlinear simulations.

\begin{figure}[htbp]
\centering
\includegraphics[width=0.62\textwidth]{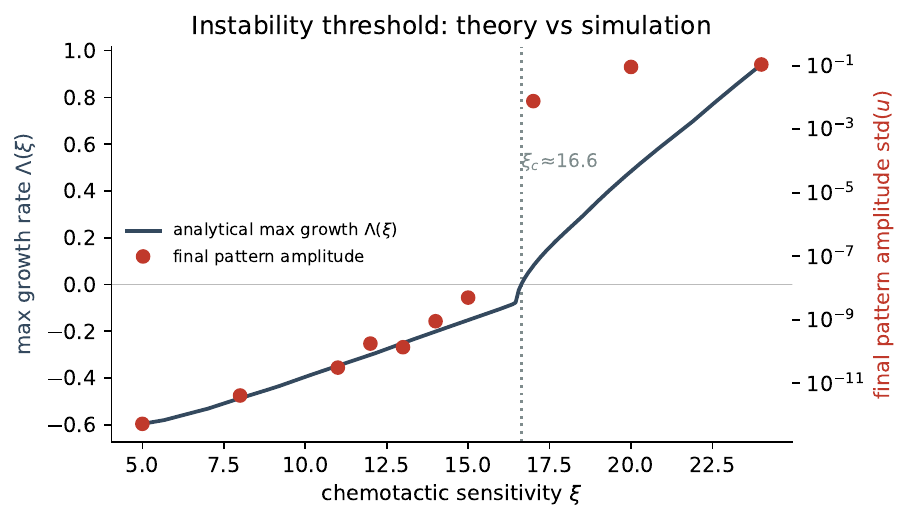}
\caption{Instability threshold, theory versus simulation. The analytic maximal
    growth rate $\Lambda(\xi)$ (curve) crosses zero at $\xi_c\approx16.6$ (dotted line); the final simulated pattern amplitude (markers, right log axis) switches from noise level to $\mathcal O(10^{-2})$ exactly at $\xi_c$.\label{fig:threshold}}
\end{figure}

The spatial theory predicts not only stationary pattern formation but also a
distinct oscillatory route. This second mechanism was realized at
$E_*\approx(0.511,0.489,0.636)$
($\sigma_0=0.3,\sigma_1=0.5,\alpha=1,\gamma=0.5,\beta=1,\delta=0.5,\ell=1,
d_1=0.01,d_2=0.1,d_3=1,\xi=20$). The band $k=3,\dots,9$ is unstable through the
determinant condition with $a_1,a_2,a_3>0$ and genuinely complex eigenvalues;
the fastest mode is $k=6$ with
$\lambda_6\approx0.189+1.682\,i$ (period $\approx3.74$). Eigenvector-seeded
simulation reproduces both parts of the complex pair: growth rate $0.179$
(predicted $0.189$) and angular frequency $1.680$ (predicted $1.682$), with a
sinusoid at the predicted frequency fitting the normalized oscillation at
$R^2=0.996$ (Fig.~\ref{fig:oscillatory}A). The saturated nonlinear state
sustains time-dependent immune-access waves (Fig.~\ref{fig:oscillatory}B).
Thus the two theoretically predicted instability routes are numerically
distinguishable, producing qualitatively different spatial states: static
hot--cold domains through the stationary route and dynamic immune-access waves
through the oscillatory route.

\begin{figure}[htbp]
\centering
\includegraphics[width=0.98\textwidth]{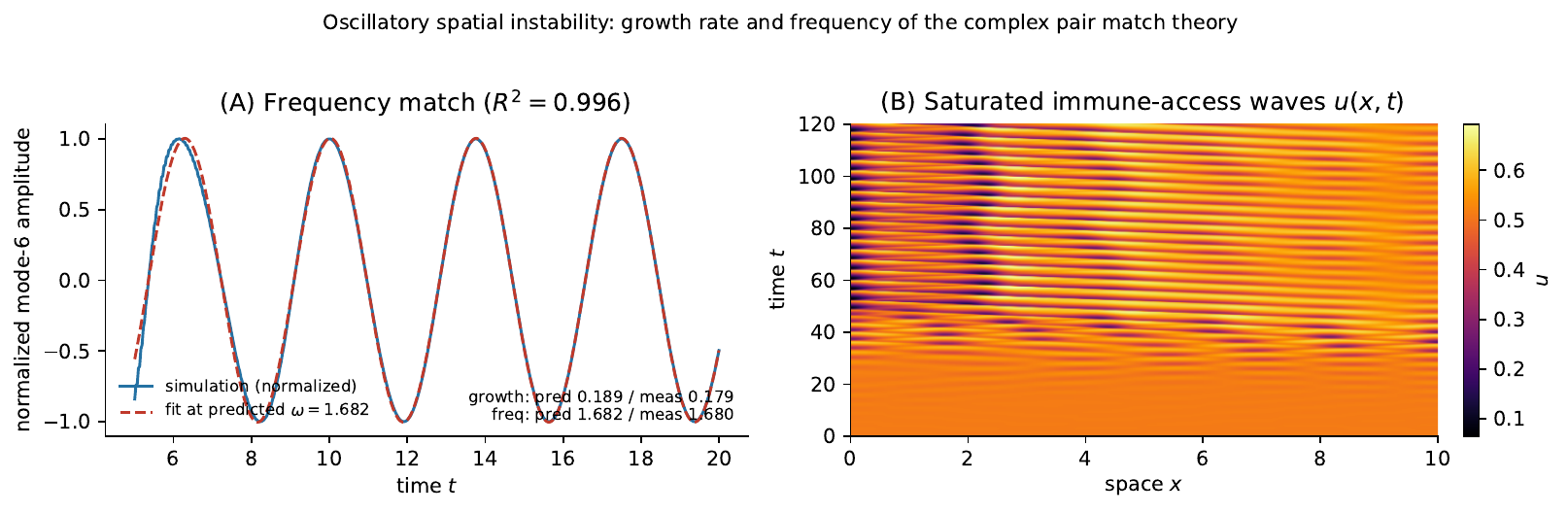}
\caption{Oscillatory spatial instability (second Routh--Hurwitz route). (A) Normalized mode-6 amplitude from the PDE with a sinusoid fitted at the predicted frequency $\omega=1.682$ ($R^2=0.996$); growth rate and frequency both match the complex-pair prediction. (B) Saturated state: persistent, time-dependent immune-access waves $u(x,t)$.\label{fig:oscillatory}}
\end{figure}

These instability results establish that spatial organization can fundamentally
alter TME dynamics, but they do not by themselves show that spatial structure
can change a treatment decision. We therefore next impose the
shared-initial-condition protocol and ask whether spatial organization of
immune access can reverse a well-mixed treatment prediction. The robust
mechanism is not metastable survival of a pattern under a uniform recruitment
increase --- under uniform recruitment, once
$\overline{\sigma_0}>\delta$ the spatial model cleared together with the ODE in
every regime examined --- but spatial limitation of immune \emph{access}.
Using the margin profile \eqref{eq:sigma0_margin} with
$\sigma_0^{\rm hi}=1.6$, $r_m=2$ on $\Om=(0,10)$ (and
$\sigma_1=0.5,\alpha=0.5,\gamma=0.5,\beta=1,
\delta=0.5,\ell=1,d_1=0.01,d_2=0.08,d_3=1,\xi=8$), the spatial mean is
$\overline{\sigma_0}=0.64>\delta=0.5$, so the matched comparator predicts
clearance --- confirmed numerically, $u_{\rm ODE}(t_{\rm end})\approx10^{-15}$.

Under identical total recruitment, the spatial model instead settles into a
stable interior tumor refuge: effectors accumulate at the recruiting margins
and are depleted in the interior, where the tumor persists with interior mean
density $\overline u_{\rm int}\approx0.38$ and interior mass $\approx2.30$,
while the margin tumor mass falls to $\approx10^{-3}$
(Fig.~\ref{fig:divergence}). The outcome is grid-stable (interior mass changes
by $2.8\times10^{-3}$, below $1\%$, from $N=200$ to $N=400$) and preserves
nonnegativity ($m_{\min}\approx5\times10^{-11}$). Because the ODE and PDE share
the same averaged initial state and total recruitment, the surviving interior
tumor is attributable solely to the spatial organization of immune access:
\emph{spatially averaged recruitment can predict clearance, while spatially limited access preserves an interior tumor refuge}.

\begin{figure}[htbp]
\centering
\includegraphics[width=0.98\textwidth]{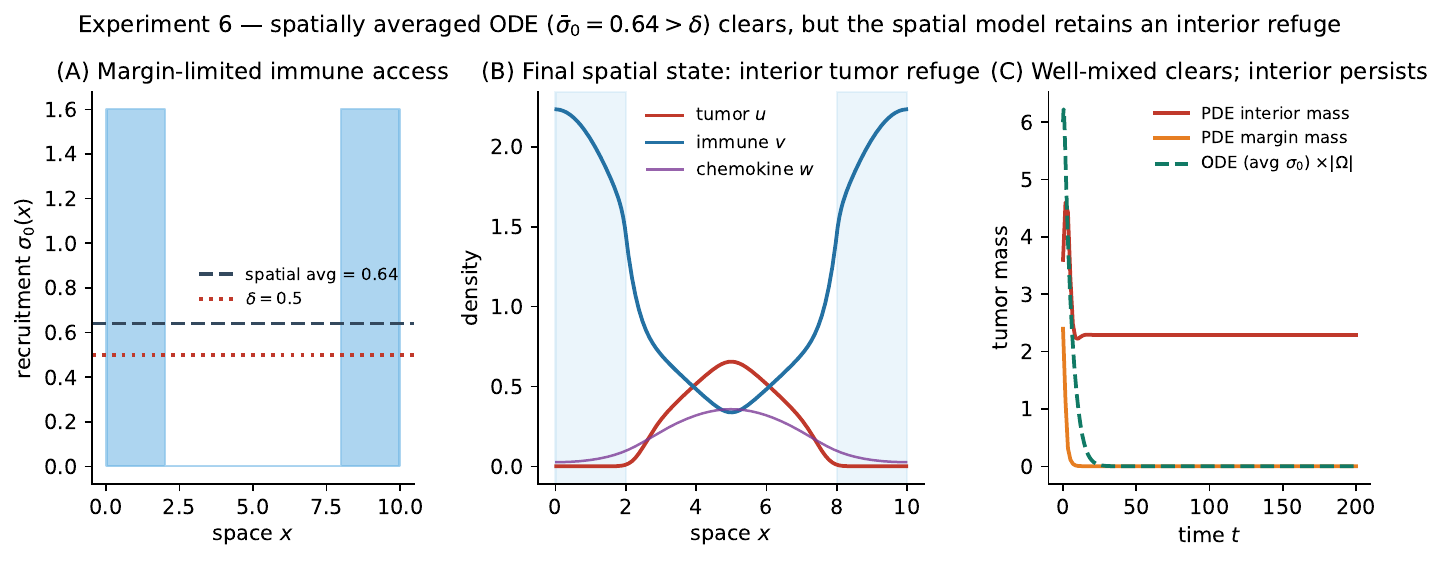}
\caption{Spatially limited immune recruitment creates ODE--PDE treatment divergence. (A) Margin-localized recruitment $\sigma_0(x)$ with spatial mean $\overline{\sigma_0}=0.64>\delta=0.5$. (B) Final PDE state: interior tumor refuge with margin-confined effector accumulation. (C) The spatially averaged ODE clears from the same averaged initial state, while the PDE interior tumor mass persists.\label{fig:divergence}}
\end{figure}

The same effect persists across a broader parameter region rather than arising
from a single parameter choice. Varying the margin recruitment level
$\sigma_0^{\rm hi}$ and the margin half-width $r_m$ over a grid, we computed
both the well-mixed outcome from the averaged recruitment
$\overline{\sigma_0}$ and the interior refuge mass of the spatial model
(Fig.~\ref{fig:refuge_phase}). The averaged ODE clears whenever
$\overline{\sigma_0}>\delta$ (dashed contour), yet across the entire clearing
region the spatial model retains a nonzero interior refuge, whose mass
decreases monotonically as the margin widens --- that is, as immune access
reaches deeper into the tissue. The refuge vanishes only when the recruiting
margin is wide enough to cover the interior. The single-point result of
Fig.~\ref{fig:divergence} is therefore representative of an extended region of
parameter space in which average recruitment sufficiency and effective spatial
access diverge.

\begin{figure}[htbp]
\centering
\includegraphics[width=0.66\textwidth]{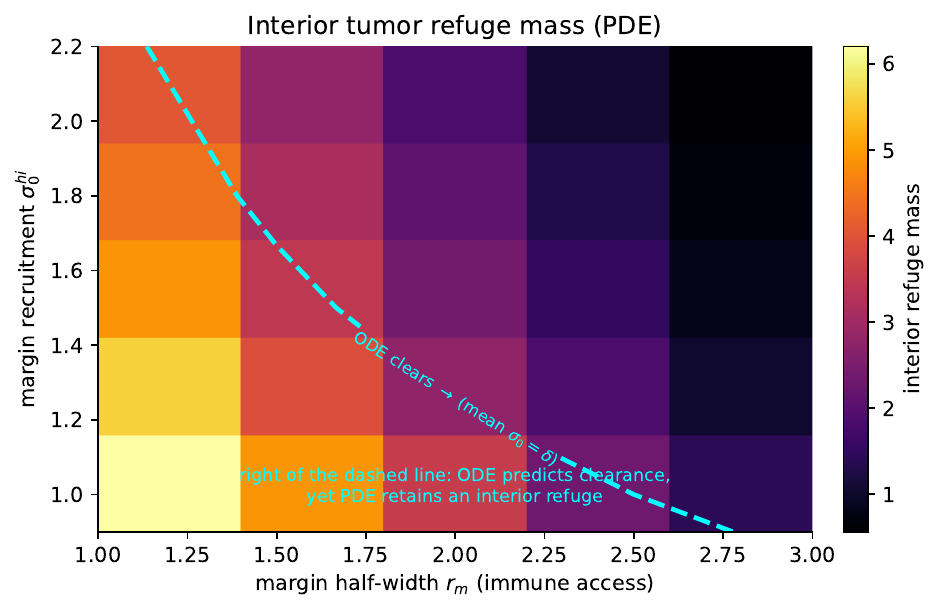}
\caption{Refuge phase diagram. Colour shows the spatial-model interior tumor refuge mass over margin recruitment $\sigma_0^{\rm hi}$ and margin half-width $r_m$; the dashed contour is the well-mixed clearance boundary $\overline{\sigma_0}=\delta$. To its right the averaged ODE predicts clearance, yet the spatial model retains an interior refuge that shrinks with increasing access (larger $r_m$).\label{fig:refuge_phase}}
\end{figure}

The preceding results identify limited recruitment as the principal mechanism
behind the ODE--PDE divergence. We next varied chemotactic sensitivity with the
margin-recruitment profile fixed to determine whether directed migration can
restore the lost access. Increasing $\xi$ from $0$ to $16$ reduced the interior
tumor mass monotonically from $4.59$ to $1.28$ and raised interior mean effector
density from $0.25$ to $0.80$ (Fig.~\ref{fig:chemotaxis_access}). Because the
interior tumor produces chemokine, directed migration transports
marginally recruited effectors up the resulting gradient and into the interior.
Chemokine-guided migration is thus a partial \emph{remedy} for access-limited
delivery rather than a cause of the refuge; it improves interior access but
cannot, alone, overcome a recruitment supply confined to the margins (a
residual refuge persists even at the largest $\xi$). This is consistent with
the dispersion results, since in both settings chemotaxis reorganizes effectors
in space rather than changing their number.

\begin{figure}[htbp]
\centering
\includegraphics[width=0.85\textwidth]{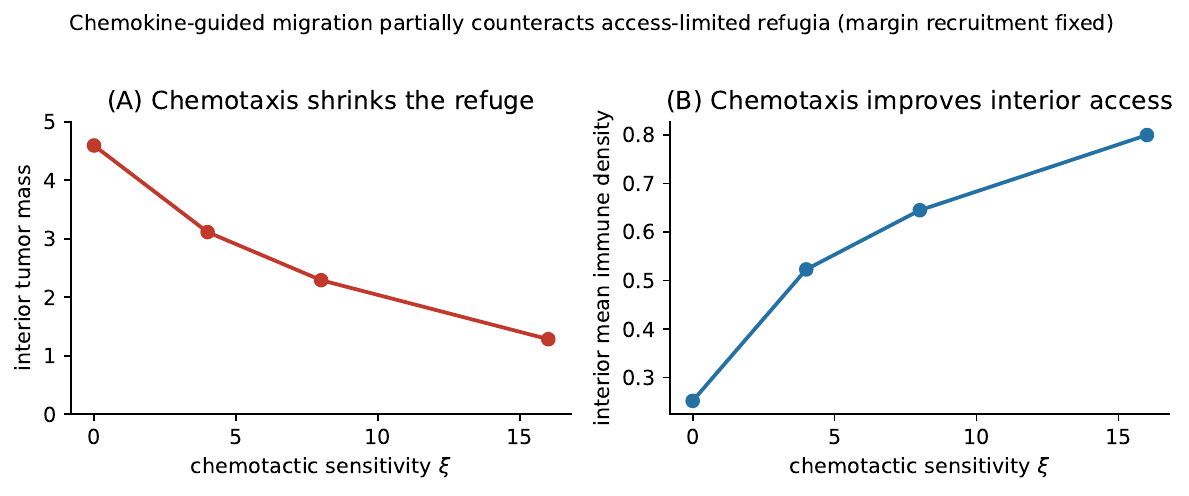}
\caption{Role of chemotaxis with margin recruitment fixed. (A) Interior tumor mass decreases monotonically with chemotactic sensitivity $\xi$. (B) Interior mean effector density increases correspondingly. Chemotaxis partially counteracts, but does not eliminate, the access-limited refuge.\label{fig:chemotaxis_access}}
\end{figure}

A complementary intervention is to manipulate the chemokine field directly at
the tissue boundary. We therefore also tested boundary-driven chemokine
delivery, imposing $w|_{\partial\Om}=B_w$ (ghost cells;
Appendix~\ref{app:numerics}) on a tumor-positive tissue with sub-threshold
uniform recruitment. In contrast to margin-limited recruitment, it did not
robustly produce a clean margin-clearance/interior-refuge separation. Two
effects limit it. First, the recruitment response is saturating,
$\sigma_1 w/(1+w)\le\sigma_1$, and acts only within the chemokine penetration
length $\sqrt{d_3/\ell}$: when this exceeds the margin, the interior is also
rescued and the tissue clears globally; when it is short, the margin is
incompletely covered. Second, with active chemotaxis the boundary gradient
draws effectors into a thin edge layer (local density exceeding $10^2$) that
starves the adjacent sub-margin. Across the delivery strengths, penetration
depths, and chemotactic sensitivities examined, boundary chemokine delivery
produced global clearance, global persistence, or non-monotone edge-concentrated
profiles, but not a clean interior refuge. The robust driver of interior
refugia is therefore spatial limitation of effector \emph{recruitment} rather
than of chemokine alone. This distinction further supports the interpretation
that the key therapeutic bottleneck is not simply the presence of a chemokine
signal, but the spatial availability of cytotoxic effectors.

\section{Discussion}\label{sec:discussion}

This study indicates that immune recruitment and immune access are distinct
determinants of tumor control in a spatially organized HCC microenvironment.
Increasing recruitment can be sufficient in a well-mixed model yet insufficient
in tissue when recruitment is spatially restricted to accessible regions such as
vascularized margins. In the central simulation the spatially averaged
recruitment satisfies the well-mixed control condition and the matched ODE
comparator clears from the same averaged initial state, yet the PDE preserves an
interior tumor refuge because effectors accumulate at the margins and do not
adequately reach the interior. Immune abundance and immune access are therefore
not equivalent: a tissue may receive enough total immune input to clear tumor cells in the corresponding well-mixed model while spatial delivery constraints leave interior compartments immune-cold.

This distinction also clarifies the role of chemokine-guided migration. Because
the interior tumor produces chemokine, directed migration transports marginally
recruited effectors into the tumor-rich interior, so that stronger chemotaxis
\emph{reduces} the refuge and raises interior effector density
(Fig.~\ref{fig:chemotaxis_access}); the residual refuge does not vanish even at
the largest $\xi$ tested. Chemotaxis thus acts as a partial remedy for
access-limited delivery rather than as a cause of the refuge. At the same time,
the same chemotactic mechanism can, in other parameter regimes, destabilize
homogeneous coexistence into stationary hot--cold domains or oscillatory
immune-access waves (Figs.~\ref{fig:dispersion}--\ref{fig:oscillatory}). These
effects are not contradictory: in each case chemotaxis reorganizes effector
cells in space, with the resulting outcome determined by the balance among
motility, chemokine gradients, local recruitment, and tumor--immune
interactions. The local threshold $\sigma_0>\delta$ therefore remains a useful
well-mixed reference, but it does not by itself predict the tissue-scale
outcome.

These findings provide a mechanistic account of how immune-cold and
immune-excluded regions can arise from spatial transport and local interaction
alone, without requiring pre-existing resistant clones, and how a treatment
that is sufficient on average can nevertheless leave a protected interior
reservoir --- a plausible source of incomplete response and relapse. The model
parameters map naturally onto intervention axes: $\sigma_0$ to baseline or
adoptive immune support, $\sigma_1$ to chemokine-dependent recruitment
efficiency, $\alpha,\gamma$ to chemokine production or restoration, and $\eta$
in the extended model to suppressive-pathway blockade. The central therapeutic
implication is therefore that increased recruitment is not equivalent to
improved tumor control: spatial delivery and spatial access must be considered
together. This distinction is especially relevant for HCC, where vascular
access, stromal barriers, and immunosuppressive niches are themselves
spatially organized.

The same framework also provides a natural connection to emerging cutting-edge
pathology and spatial-omics measurements. The model fields map onto measurable
spatial layers: $u$ to tumor regions, $v$ to multiplex
CD8/granzyme-B/NK markers, $w$ to spatial-transcriptomic CXCL9/10/11 activity,
and $s$ to suppressive programs such as TGF-$\beta$, CD274, NT5E, and
Treg/myeloid markers. Spatial omics and state-of-the-art pathology preserve the
tissue-level organization lost to bulk or dissociated measurements
\cite{rao2021exploring,moffitt2022emerging}, enabling spatial mapping of
immune infiltration and exclusion patterns within their native context
\cite{palla2022spatial}. The biomarkers
$H_{\rm hot},H_{\rm cold},I_{\rm excl},M_{\rm chem}$ are interpretable
descriptors that can be computed from simulations and, in principle, from
segmented histology or spatial-omics data, thereby providing a quantitative
bridge between mechanistic modelling and state-of-the-art TME phenotyping.

A practical model-to-data workflow follows directly from this correspondence.
First, pathology or spatial-omics data are converted into spatial layers: a
tumor mask or tumor probability, cytotoxic immune-cell density,
chemokine/pathway activity, and optional suppressive pathway activity. Second,
the same spatial biomarkers
$H_{\rm hot}$, $H_{\rm cold}$, $I_{\rm excl}$, and $M_{\rm chem}$ are computed
from both data and simulations. Third, model
parameter regimes are selected that reproduce the observed biomarker profile
and spatial phenotype. Fourth, the corresponding regime is interpreted
mechanistically --- for example as margin-limited recruitment, chemotactic
aggregation, weak chemokine response, or suppressive TME feedback. In this
sense, state-of-the-art pathology can identify and quantify the spatial TME state,
whereas the reaction--diffusion--chemotaxis model supplies candidate mechanisms
by which that state may arise. The model therefore serves not as a replacement
for spatial data analysis, but as a mechanistic layer that helps connect
observed spatial phenotypes to experimentally testable biological hypotheses.

Several limitations define the scope of these conclusions. The model uses one effective cytotoxic population and one lumped CXCR3-ligand field; it omits explicit ECM, vasculature, and clonal evolution; parameters are nondimensional and not patient-calibrated; and the continuum description is less reliable at very low cell numbers. The present study also does not perform direct patient-level inference from histology or spatial-omics data. Instead, it establishes a mechanistic dictionary between model-generated spatial states and data-derived spatial phenotypes, with patient-specific calibration and validation against multiplex imaging or spatial transcriptomics left to future work. The boundary-chemokine results further show that conclusions can be lever-specific, reinforcing the need to distinguish the biological mechanism of an intervention from its nominal strength.

Several extensions are therefore natural. Explicit macrophage, Treg, or MDSC compartments could replace the effective suppressive variable $s$; ECM and haptotaxis could be incorporated to represent physical and adhesive barriers; vascularized domains with explicit spatial sources could resolve recruitment pathways more realistically; and stochastic resistant clones could be added to study the interaction between spatial refugia and evolutionary escape. We also deliberately do \emph{not} claim two analyses that are not supported by the present simulations: a uniform-recruitment ``mobility paradox'' and a state-of-the-art/random-forest virtual-patient classifier. In the access-limited regime
studied here, stronger migration \emph{helped} rather than impaired tumor control, while construction of a learned virtual-patient classifier requires a systematic parameter-sampling and validation framework beyond the present solver. These questions are therefore better viewed as directions for future work rather than conclusions of the current study.

Finally, calibration against multiplex imaging and spatial transcriptomics would allow direct estimation of tumor--immune overlap, exclusion indices, and chemokine--access mismatch from patient-specific spatial data. Combined with state-of-the-art parameter inference, such calibration could eventually be used to classify whether a tumor is likely immune-hot, immune-cold, immune-excluded, or in a spatial-refuge state, while simultaneously identifying the underlying mechanistic regime. The broader implication is that spatial TME modelling can link three levels of description that are often treated separately: the dynamical mechanisms governing immune recruitment and transport, the spatial phenotypes observed in tissue, and the quantitative biomarkers extracted from
state-of-the-art pathology and spatial omics.

\appendix

\section{Mathematical Proofs}\label{app:analysis}

Throughout, $\Om\subset\R^n$ is bounded with smooth boundary, $n=1,2$, and
homogeneous Neumann conditions are imposed unless stated. We treat the core model
\eqref{eq:core_pde} with $d_1,d_2,d_3>0$, $\xi,\sigma_0,\sigma_1,\beta,\alpha,
\gamma\ge0$, $\delta,\ell>0$, and nonnegative $C^{2+\theta}$ initial data
satisfying the compatibility conditions.

Writing $U=(u,v,w)^\top$, \eqref{eq:core_pde} is the quasilinear system
$U_t=\nabla\!\cdot(A(U)\nabla U)+F(U)$ with upper-triangular diffusion matrix
$A(U)=\begin{pmatrix}d_1&0&0\\ 0&d_2&-\xi v\\ 0&0&d_3\end{pmatrix}$,
whose eigenvalues $d_1,d_2,d_3>0$ make the principal part normally parabolic, and
$F$ smooth on $\{w>-1\}$. Standard quasilinear parabolic theory then yields a
unique classical solution on a maximal interval $[0,T_{\max})$, with the blow-up
alternative: if $T_{\max}<\infty$ then
$\limsup_{t\uparrow T_{\max}}(\|u\|_\infty+\|v\|_\infty+\|w\|_\infty)=\infty$.

The first step toward global control is to establish preservation of the
nonnegative state space.

\begin{proposition}[Nonnegativity]\label{app:prop:positivity}
For nonnegative initial data the classical solution satisfies $u,v,w\ge0$ on
$[0,T_{\max})$.
\end{proposition}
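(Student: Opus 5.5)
Nonnegativity follows from the quasi-positive structure of the reaction terms combined with the parabolic maximum principle, applied equation by equation. We work on $[0,T]$ for arbitrary fixed $T<T_{\max}$. There, the classical solution is bounded and has bounded derivatives up to second order in $x$ (and first in $t$) on $\overline\Om\times[0,T]$. One preliminary point: $F$ is only smooth on $\{w>-1\}$. The local theory starts from $w_0\ge0$, and we will show that $w$ remains nonnegative, so the term $w/(1+w)$ stays well defined throughout.

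\textbf{Step 1 ($u\ge0$).} Write the $u$-equation as a linear equation $u_t=d_1\Delta u+c_1(x,t)u$, where $c_1:=1-u-v$ is bounded on $\overline\Om\times[0,T]$. Set $\tilde u=e^{-Kt}u$ with $K>\sup|c_1|$. If $\tilde u$ took a negative minimum, the Neumann condition and the parabolic strong maximum principle together with Hopf's lemma would give a contradiction. Hence $u\ge0$. Equivalently, the uniqueness of solutions to this linear problem shows that $u\equiv0$ wherever $u_0\equiv0$, and otherwise $u>0$.

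\textbf{Step 2 ($w\ge0$ and $v\ge0$, coupled).} The $w$-equation reads $w_t=d_3\Delta w-\ell w+\alpha u+\gamma uv$. Its source $\alpha u+\gamma uv$ need not be nonnegative until we know $v\ge0$, so $v$ and $w$ must be handled together. To avoid circularity, use a first-violation-time argument. Let
\[
t^*=\sup\{t\in[0,T]:\ v,w\ge-\varepsilon e^{Kt}\ \text{on}\ \overline\Om\times[0,t]\},
\]
with $\varepsilon>0$ small and $K$ large. For the $v$-equation, expand the chemotaxis term as $-\xi\nabla w\cdot\nabla v-\xi v\Delta w$. This gives the linear form $v_t=d_2\Delta v+b\cdot\nabla v+c_2v+g$, where $b=-\xi\nabla w$ and $c_2=-\xi\Delta w-\delta-\beta u$ are bounded, and $g=\sigma_0+\sigma_1 w/(1+w)$.

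Now suppose the bound is violated first at $t^*$, at a point where $v=-\varepsilon e^{Kt^*}$ (an interior or boundary minimum). At such a point $\Delta v\ge0$ and $\nabla v=0$; on the boundary, the Neumann condition gives $\partial_nv=0$, and Hopf's lemma handles this case. The source satisfies $g\ge\sigma_1 w/(1+w)\ge-C\varepsilon e^{Kt^*}$, since $w\ge-\varepsilon e^{Kt^*}>-1$. The time derivative at that point must be $\le -K\varepsilon e^{Kt^*}$. This contradicts the inequality $v_t\ge -(\sup|c_2|+C)\varepsilon e^{Kt^*}$ once $K$ is large. The case where $w$ is violated first is analogous: there $\alpha u+\gamma uv\ge -\gamma\|u\|_\infty\varepsilon e^{Kt^*}$. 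Hence $t^*=T$. Letting $\varepsilon\to0$ gives $v,w\ge0$ on $[0,T]$, and since $T<T_{\max}$ was arbitrary, on all of $[0,T_{\max})$.

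\textbf{Main obstacle.} The delicate point is Step 2. The $v$-equation contains the cross-diffusive term, and the $v$- and $w$-sources are mutually dependent. Both issues are resolved by the same device: treating $\nabla w$ and $\Delta w$ as bounded known coefficients on $[0,T]$, which is legitimate because the solution is classical, and using the simultaneous $\varepsilon e^{Kt}$ comparison. An alternative is to first truncate the nonlinearities with $u^+$, $v^+$ and $w^+/(1+w^+)$, obtain nonnegativity for the truncated system, and then appeal to uniqueness. The same argument covers $s$ in the extended model \eqref{eq:extended_pde}, whose source $a_su+b_suv\ge0$.
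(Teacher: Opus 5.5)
Your proof is correct, but it follows a different route from the paper.

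\textbf{The paper's route.} The paper first passes to a truncated system, using $w^+$ in the recruitment term and $v^+$ in the $\gamma uv$ source. This decouples the sign dependencies, so each component can be handled on its own:
\begin{itemize}
\item $u\ge0$ by scalar comparison;
\item $v\ge0$ by an $L^2$ energy (Stampacchia-type) argument: testing with $v_-$, integrating the chemotactic term by parts into $\tfrac{\xi}{2}\int_\Om(\Delta w)v_-^2$, and applying Gronwall;
\item $w\ge0$ by comparison.
\end{itemize}
Uniqueness then identifies the truncated solution with the original one.

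\textbf{Your route.} You work on the original system throughout. You break the $v$--$w$ circularity with a simultaneous exponential barrier $-\varepsilon e^{Kt}$ and a first-violation-time argument, treating $-\xi\nabla w$ and $-\xi\Delta w$ as bounded coefficients.

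\textbf{What each buys.} Your approach avoids the auxiliary truncated problem, whose local well-posedness and uniqueness the paper has to invoke to transfer the conclusion back. The paper's energy approach never needs to know where the minimum is attained: the Neumann conditions enter only through vanishing boundary integrals. It also extends to weaker solutions, needing only $v\in H^1$ and $\Delta w\in L^\infty$. Both arguments rest on the same regularity input, namely boundedness of $\Delta w$ on $\overline\Om\times[0,T]$.

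\textbf{Two small points to tighten.}
\begin{itemize}
\item \emph{Order of constants.} Fix $K$ first; it depends only on $\sup|c_2|$, $\sigma_1$ and $\gamma\|u\|_\infty$ over $\overline\Om\times[0,T]$. Then choose $\varepsilon$ with $\varepsilon e^{KT}\le\tfrac12$, so that $\sigma_1 w/(1+w)\ge-2\sigma_1\varepsilon e^{Kt^*}$ and the contradiction follows once $K>\sup|c_2|+2\sigma_1$ and $K>\gamma\|u\|_\infty$.
\item \emph{Boundary minima.} Hopf's lemma is not the right tool inside a first-violation argument. The Neumann condition together with tangential minimality gives $\nabla v=0$ at the boundary point. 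Minimality along all inward directions then makes $D^2v$ positive semidefinite, so $\Delta v\ge0$, and your interior computation applies verbatim. The same remark covers $w$, and the analogous step in your treatment of $u$.
\end{itemize}
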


\begin{proof}
Work with the truncated system in which $w/(1+w)$ and the $\gamma uv$ source use
$w^+$ and $v^+$; its right-hand side is locally Lipschitz, so the local theory
applies and it suffices to prove nonnegativity there. Since the $u$-reaction
$u(1-u-v)$ vanishes at $u=0$, the scalar comparison principle gives $u\ge0$. For
$v$, fix $T<T_{\max}$ and set $v_-=\max\{-v,0\}$. The map $z\mapsto\max\{-z,0\}$
is Lipschitz, so $v_-(\cdot,t)\in H^1(\Om)$ with
$\nabla v_-=-\mathbf 1_{\{v<0\}}\nabla v$ and
$t\mapsto\frac12\int_\Om v_-^2$ is absolutely continuous; all identities below
are in the weak sense. Because the solution is classical
$C^{2,1}$ on $\overline\Om\times[0,T]$, $\|\Delta w(\cdot,t)\|_{L^\infty}$ is
finite and bounded on $[0,T]$, which justifies the Gronwall step. Testing the
$v$-equation with $v_-$ and using the Neumann conditions,
\[
-\tfrac12\tfrac{d}{dt}\!\int_\Om v_-^2
= d_2\!\int_\Om|\nabla v_-|^2 +\tfrac{\xi}{2}\!\int_\Om(\Delta w)v_-^2
+\int_{\{v<0\}}\!\Big(\sigma_0+\sigma_1\tfrac{w^+}{1+w^+}-\delta v-\beta uv\Big)v_-,
\]
where the diffusion term $d_2\!\int_\Om|\nabla v_-|^2\ge0$, the truncated source
is $\ge0$, and on $\{v<0\}$ the terms $-\delta v,-\beta uv$ contribute
nonnegative multiples of $v_-$. Dropping these nonnegative contributions and
bounding the chemotaxis term by
$\tfrac{\xi}{2}\|\Delta w\|_{L^\infty}\!\int_\Om v_-^2$ gives
$\frac{d}{dt}\int_\Om v_-^2\le\xi\|\Delta w\|_{L^\infty}\int_\Om v_-^2$, and
since $v_-(\cdot,0)=0$, Gronwall gives $v_-\equiv0$, i.e.\ $v\ge0$. Finally
$w_t-d_3\Delta w+\ell w=\alpha u+\gamma uv\ge0$ with $w_0\ge0$ gives $w\ge0$ by
comparison. Then $v^+=v$, $w^+=w$, so the truncated and original systems
coincide; uniqueness finishes the proof.
\end{proof}

For the extended model, once $u,v,w\ge0$ the $s$-equation
$s_t-d_4\Delta s+\ell_s s=a_su+b_suv\ge0$ gives $s\ge0$. The same positivity
property also permits a direct control of the tumor component. Indeed, since
$v\ge0$, the tumor equation satisfies a scalar logistic upper inequality, yielding
the uniform bound
\begin{proposition}[Tumor bound]\label{app:prop:u_bound}
$0\le u(x,t)\le M_u:=\max\{1,\|u_0\|_{L^\infty}\}$.
\end{proposition}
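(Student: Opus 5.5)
The lower bound $u\ge0$ is already part of Proposition~\ref{app:prop:positivity}, so only the upper bound $u\le M_u$ needs proof. The plan is a scalar comparison argument for the $u$-equation alone, treating $v$ as a given nonnegative coefficient. This decouples $u$ from the chemotactic $v$-equation: the upper bound needs no information on $v$ beyond its sign.

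Fix $T<T_{\max}$. On $\overline\Om\times[0,T]$ the classical solution satisfies
\[
u_t-d_1\Delta u=u(1-u-v)\le u(1-u),
\]
because $u\ge0$ and $v\ge0$. The constant $\bar u:=M_u$ is a supersolution of the logistic problem. Indeed $\bar u_t-d_1\Delta\bar u=0\ge\bar u(1-\bar u)$, since $M_u\ge1$, and $\partial_n\bar u=0$ on $\partial\Om$, with $\bar u\ge u_0$ at $t=0$. I would carry out the comparison directly through an energy argument that parallels the proof of Proposition~\ref{app:prop:positivity}. Set $z=(u-M_u)_+$ and test the $u$-equation with $z$. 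The Neumann condition removes the boundary terms, so
\[
\tfrac12\tfrac{d}{dt}\int_\Om z^2=-d_1\int_\Om|\nabla z|^2+\int_{\{u>M_u\}}u(1-u-v)\,z .
\]
On $\{u>M_u\}$ we have $u>1$ and $v\ge0$, so $u(1-u-v)<0$. Hence $\frac{d}{dt}\int_\Om z^2\le0$. Since $z(\cdot,0)=0$, this gives $z\equiv0$, that is, $u\le M_u$ on $[0,T]$.

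Because $T<T_{\max}$ was arbitrary, the bound holds on the whole interval $[0,T_{\max})$, and it is uniform in time.

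There is no real obstacle here. The only point requiring care is the sign of the reaction term on the superlevel set. It uses both $v\ge0$, from Proposition~\ref{app:prop:positivity}, and $M_u\ge1$, which explains why the bound is $\max\{1,\|u_0\|_{L^\infty}\}$ rather than $\|u_0\|_{L^\infty}$ alone. As an alternative, one can apply the classical parabolic maximum principle to $u-M_u$, with the Hopf lemma handling the Neumann boundary.
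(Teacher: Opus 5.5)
Your proof is correct and follows the paper's route: both arguments use $u,v\ge0$ to obtain $u_t-d_1\Delta u\le u(1-u)$ and then compare with the logistic dynamics, for which the constant $M_u\ge1$ is a supersolution under the Neumann condition. Your test with $(u-M_u)_+$ is an explicit version of the comparison principle that the paper invokes in a single line.
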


\begin{proof}
Since $v\ge0$, $u_t-d_1\Delta u=u(1-u-v)\le u(1-u)$; comparison with the scalar
logistic equation gives the bound.
\end{proof}

This pointwise tumor estimate in turn gives useful integral control of the two
remaining variables. Integrating the $v$- and $w$-equations and using
$0\le w/(1+w)\le1$, $0\le u\le M_u$ and the no-flux conditions gives the mass
estimates
$\frac{d}{dt}\!\int_\Om v\le|\Om|(\sigma_0+\sigma_1)-\delta\!\int_\Om v$ and
$\frac{d}{dt}\!\int_\Om w\le\alpha M_u|\Om|+\gamma M_u\!\int_\Om v-\ell\!\int_\Om w$,
whence $\int_\Om v,\int_\Om w$ are bounded for all time by Gronwall.

The preceding estimates provide uniform $L^\infty$ control of $u$ and uniform
$L^1$ control of $v,w$. We now upgrade these estimates to global existence with
uniform $L^\infty$ bounds. The main difficulty is possible chemotactic
concentration of $v$; importantly, the chemoattractant $w$ is produced by
$\alpha u+\gamma uv$, which, because $0\le u\le M_u$, is a nonnegative source
that is at most \emph{linear} in $v$ with coefficient bounded by $\gamma M_u$.
The signal-decoupled case can be closed in arbitrary dimension.

\begin{proposition}[Signal-decoupled boundedness, any dimension]
\label{app:prop:gamma0}
If $\gamma=0$, then for every $n\ge1$ the classical solution is global and
uniformly bounded. In particular $0\le w\le M_w:=\max\{\|w_0\|_{L^\infty},
\alpha M_u/\ell\}$ and $\sup_{t>0}\|v(\cdot,t)\|_{L^\infty(\Om)}<\infty$.
\end{proposition}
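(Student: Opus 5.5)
With $\gamma=0$ the chemokine equation $w_t=d_3\Delta w+\alpha u-\ell w$ is driven only by the tumour density, and $u$ is already controlled by Proposition~\ref{app:prop:u_bound}. The plan is therefore to bound $w$ first, then obtain uniform $C^1$ (gradient) control of $w$, then bound $v$, and finally close with the blow-up alternative.

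\textbf{Step 1: $L^\infty$ bound on $w$.} By Proposition~\ref{app:prop:u_bound}, the source satisfies $0\le \alpha u\le \alpha M_u$. The constant $M_w=\max\{\|w_0\|_{L^\infty},\alpha M_u/\ell\}$ is then a supersolution of $w_t-d_3\Delta w+\ell w=\alpha u$ with Neumann data. Comparison together with Proposition~\ref{app:prop:positivity} gives $0\le w\le M_w$ on $[0,T_{\max})$.

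\textbf{Step 2: gradient bound on $w$.} Write $w$ via the variation-of-constants formula with the Neumann heat semigroup $e^{t(d_3\Delta-\ell)}$. Use the standard smoothing estimate $\|\nabla e^{t\Delta}f\|_{L^\infty}\le C(1+t^{-1/2})e^{-\lambda_1 t}\|f\|_{L^\infty}$, combined with the damping $e^{-\ell t}$ and $w_0\in C^{2+\theta}$. This yields $\sup_{t<T_{\max}}\|\nabla w\|_{L^\infty}\le C$, with $C$ independent of $T_{\max}$. Applying the same argument in $W^{1,q}$ for every $q<\infty$, and using maximal Sobolev regularity for $\Delta w$, gives uniform $L^q$ control of $\Delta w$ on time windows $(t,t+1)$. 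This step requires only the $L^\infty$ bound on the source $\alpha u$, and it is where $\gamma=0$ is essential: the signal no longer depends on $v$, so the chemotactic drift $\nabla w$ is an a priori bounded coefficient.

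\textbf{Step 3: $L^p$ and then $L^\infty$ bounds on $v$.} The $v$-equation is now linear in $v$ with a bounded drift,
\[
v_t=d_2\Delta v-\xi\nabla\cdot(v\nabla w)+g,\qquad 0\le g\le \sigma_0+\sigma_1,
\]
plus the absorbing terms $-\delta v-\beta uv\le 0$. First, test with $v^{p-1}$ and integrate the drift term by parts:
\[
(p-1)\xi\int v^{p-1}\nabla v\cdot\nabla w \le \tfrac{d_2(p-1)}{2}\int v^{p-2}|\nabla v|^2+C_p\|\nabla w\|_\infty^2\int v^p .
\]
Then the Gagliardo--Nirenberg inequality applied to $v^{p/2}$, together with the uniform $L^1$ bound on $v$ from the mass estimate, absorbs $\int v^p$ into the dissipation. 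This gives $\frac{d}{dt}\int v^p+\int v^p\le C_p$, and hence uniform $L^p$ bounds for all $p$.

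Second, write $v$ by the variation-of-constants formula using $e^{t(d_2\Delta-\delta)}$, with the divergence term estimated by $\|e^{t\Delta}\nabla\cdot \phi\|_\infty\le C(1+t^{-1/2-n/(2q)})e^{-\lambda t}\|\phi\|_{L^q}$ for $q>n$ and $\phi=v\nabla w\in L^q$ uniformly. This yields $\sup_t\|v\|_{L^\infty}<\infty$. An alternative is a Moser--Alikakos iteration.

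\textbf{Step 4: conclusion.} Steps 1 and 3 together with Proposition~\ref{app:prop:u_bound} bound $\|u\|_\infty+\|v\|_\infty+\|w\|_\infty$ uniformly, so the blow-up alternative forces $T_{\max}=\infty$.

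\textbf{Main obstacle.} The hard part is obtaining the gradient bound in Step 2 uniformly in time, in arbitrary dimension $n$, using only an $L^\infty$ source. I would take it from the semigroup estimates: the $t^{-1/2}$ singularity is integrable, and the decay factor $e^{-\ell t}$ prevents any growth in time. The remaining steps are standard for linear drift–diffusion equations.
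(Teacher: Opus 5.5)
Your proposal is correct and follows essentially the same route as the paper. Comparison with the constants $0$ and $M_w$ bounds $w$, and the Neumann heat-semigroup gradient estimate turns $\xi\nabla w$ into a uniformly bounded drift. Linear parabolic $L^p$--$L^\infty$ estimates for the $v$-equation, together with the blow-up alternative, then close the argument; you simply write out the testing, Gagliardo--Nirenberg and semigroup (or Moser) steps that the paper cites as classical. Your maximal-regularity remark on $\Delta w$ in Step 2 is correct but not needed.
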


\begin{proof}
With $\gamma=0$ the chemokine equation is $w_t=d_3\Delta w+\alpha u-\ell w$ with
$0\le u\le M_u$; the constants $0$ and $M_w$ are sub- and supersolutions, so
$0\le w\le M_w$. Since the source $\alpha u$ lies in
$L^\infty((0,\infty)\times\Om)$, the Neumann heat-semigroup gradient estimate
gives a uniform bound $\sup_{t>0}\|\nabla w(\cdot,t)\|_{L^\infty}\le K$. The
immune equation is then a linear advection--diffusion equation,
$v_t=d_2\Delta v-\xi\nabla\!\cdot(v\nabla w)+h$, with the uniformly bounded drift
$\xi\nabla w$ and a source $h=\sigma_0+\sigma_1w/(1+w)-\delta v-\beta uv\le
\sigma_0+\sigma_1$ that carries the dissipative term $-\delta v$. Classical
parabolic $L^p$--$L^\infty$ estimates for divergence-form equations with bounded
drift then give $\sup_{t>0}\|v(\cdot,t)\|_{L^\infty}<\infty$, and the blow-up
alternative yields global existence.
\end{proof}

When $\gamma>0$, the additional coupling through $uv$ requires exploiting the
spatial dimension. In one dimension the chemotactic nonlinearity remains
subcritical, allowing the $L^p$ estimates to be closed for arbitrary finite
$p$ and subsequently upgraded to an $L^\infty$ bound.

\begin{theorem}[Global existence and boundedness in one dimension]
\label{app:thm:1d}
Let $n=1$ and $\Om=(0,L)$. For nonnegative initial data the classical solution
exists for all $t>0$ and is uniformly bounded,
\[
\sup_{t>0}\big(\|u(\cdot,t)\|_{L^\infty}+\|v(\cdot,t)\|_{L^\infty}
+\|w(\cdot,t)\|_{L^\infty}\big)<\infty .
\]
\end{theorem}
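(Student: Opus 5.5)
The proof combines the bounds already available, namely $0\le u\le M_u$ (Proposition~\ref{app:prop:u_bound}) and the uniform $L^1$ bounds on $v$ and $w$, with one-dimensional regularity of $w$. The aim is a uniform-in-time $L^p$ bound on $v$ for some $p\ge2$. This then upgrades to $L^\infty$ and closes via the blow-up alternative.

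\textbf{Step 1: regularity of $w$ from $L^1$ data of $v$.} The chemokine equation is $w_t-d_3w_{xx}+\ell w=f$ with $f=\alpha u+\gamma uv$. Since $0\le f\le \alpha M_u+\gamma M_u v$, we have $\sup_t\|f\|_{L^1}<\infty$. In one dimension the Neumann heat semigroup satisfies
\[
\|\partial_x e^{t\Delta}g\|_{L^q}\le C(1+t^{-\frac12-\frac12(1-\frac1q)})e^{-\lambda_1 t}\|g\|_{L^1}.
\]
Writing $w$ by Duhamel's formula and using the damping $e^{-\ell t}$, the time singularity is integrable whenever $\frac12+\frac12(1-\frac1q)<1$, that is, for every $q<\infty$... wait, this gives $q<2$ only. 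So I obtain $\sup_t\|w_x\|_{L^q}<\infty$ for all $q\in[1,2)$, and in particular $\sup_t\|w\|_{L^\infty}<\infty$ by Sobolev embedding in 1D.

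\textbf{Step 2: $L^2$ energy estimate for $v$.} Test the $v$-equation with $v$:
\[
\tfrac12\tfrac{d}{dt}\int v^2+d_2\int v_x^2=\xi\int v v_x w_x+\int(\sigma_0+\sigma_1)v-\delta\int v^2-\beta\int uv^2 .
\]
I expect the cross term to be the main obstacle, because $w_x$ is controlled only in $L^q$ with $q<2$, which is not enough to absorb it directly. I would therefore couple it with an estimate for $\int w_x^2$. Differentiate the $w$-equation and test with $-w_{xx}$ to get
\[
\tfrac12\tfrac{d}{dt}\int w_x^2+d_3\int w_{xx}^2+\ell\int w_x^2=-\int f w_{xx}\le \tfrac{d_3}{2}\int w_{xx}^2+C\int(1+v^2).
\]
For the cross term, estimate $\xi\int vv_xw_x\le \tfrac{d_2}{2}\int v_x^2+C\|w_x\|_\infty^2\int v^2$. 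Then apply the 1D Gagliardo--Nirenberg inequalities
\[
\|v\|_{L^2}^2\le C\|v_x\|_{L^2}^{2/3}\|v\|_{L^1}^{4/3}+C\|v\|_{L^1}^2
\]
and $\|w_x\|_\infty^2\le C\|w_{xx}\|_{L^2}\|w_x\|_{L^2}+C\|w_x\|_{L^2}^2$, using the uniform $L^1$ bound on $v$ and Young's inequality. I expect the functional $Y=\int v^2+\kappa\int w_x^2$, for small $\kappa$, to satisfy an ODI of the form $Y'+cY\le C(1+Y^{\theta})$ with $\theta<1$ after absorbing the dissipation terms. If the exponents do not close directly, I would first run the same argument with $\int v\ln v$, which is the classical 1D Keller--Segel route. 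In 1D this yields $\sup_t\int v\ln v<\infty$ and hence bounds on $\int v^2$ and $\int w_x^2$. This Lyapunov-type coupling is the crux of the proof.

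\textbf{Step 3: upgrade to $L^\infty$.} Once $\sup_t\|v\|_{L^2}<\infty$, parabolic regularity for the $w$-equation gives $\sup_t\|w_x\|_{L^\infty}<\infty$, since in 1D an $L^2$ source gives $W^{1,\infty}$ control. The $v$-equation then has bounded drift, and the same Moser/semigroup $L^p$--$L^\infty$ argument as in Proposition~\ref{app:prop:gamma0} yields $\sup_t\|v\|_{L^\infty}<\infty$. The $w$-equation then has a bounded source, so $\sup_t\|w\|_\infty<\infty$. Together with Proposition~\ref{app:prop:u_bound}, the blow-up alternative gives $T_{\max}=\infty$ and the uniform bound stated in Theorem~\ref{app:thm:1d}.
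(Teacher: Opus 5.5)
Step~2, which you correctly identify as the crux, does not close as written. Combining $\|w_x\|_\infty^2\le C\|w_{xx}\|_2\|w_x\|_2+C\|w_x\|_2^2$ with $\|v\|_2^2\le C\|v_x\|_2^{2/3}+C$ produces the term $\|w_{xx}\|_2\,\|w_x\|_2\,\|v_x\|_2^{2/3}$. Young's inequality against the two dissipations $\|w_{xx}\|_2^2$ and $\|v_x\|_2^2$ then leaves a remainder $C\|w_x\|_2^{6}\le C'Y^3$. So your differential inequality is superlinear in $Y$, not of the form $Y'+cY\le C(1+Y^\theta)$ with $\theta<1$, and it gives no global bound. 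Interpolating $\|w_x\|_2$ against the $L^1$ bound on $w$ does not help either: it gives $\|w_{xx}\|_2^{8/5}\|v_x\|_2^{2/3}$, and $\frac45+\frac13>1$. The fallback through $\int v\ln v$ is only named. Moreover, a uniform bound on $\int v\ln v$ does not by itself bound $\int v^2$ or $\int w_x^2$.

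The missing ingredient is the one you retracted in Step~1. In one dimension the exponent is $\frac12+\frac12(1-\frac1q)=1-\frac1{2q}$, which is $<1$ for every finite $q$. The restriction $q<2$ is the two-dimensional condition $\frac12+(1-\frac1q)<1$. So Duhamel's formula, the damping $e^{-\ell t}$, and $\sup_t\|\alpha u+\gamma uv\|_{L^1}<\infty$ give $K:=\sup_t\|w_x\|_{L^4}<\infty$. With this, the plain $L^2$ estimate closes without any coupled functional. You have $\xi\int vv_xw_x\le\frac{d_2}{2}\|v_x\|_2^2+\frac{\xi^2K^2}{2d_2}\|v\|_4^2$, and $\|v\|_4^2\le C\|v_x\|_2\|v\|_1+C\|v\|_1^2$. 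Hence $\frac{d}{dt}\|v\|_2^2+2\delta\|v\|_2^2\le C$, and your Step~3 then finishes correctly. Even with only $q\in(1,2)$, Step~2 would close if you interpolated $\|w_x\|_2$ between $\|w_{xx}\|_2$ and $\|w_x\|_{L^q}$, but the proposal does not do this.

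This corrected route differs from the paper's. The paper tests with $v^{p-1}$ and rewrites the cross term as $-\frac{p-1}{p}\xi\int v^pw_{xx}$. It then substitutes the $w$-equation to produce $\frac{(p-1)\xi\gamma M_u}{pd_3}\int v^{p+1}$, absorbs this by the one-dimensional Gagliardo--Nirenberg inequality, and concludes by Moser iteration. Your semigroup route instead treats the drift $w_x$ as controlled a priori from $L^1$ data. Its advantage is that it never generates the term $\int v^pw_t$ that the paper's substitution introduces.
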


\begin{proof}
By Proposition~\ref{app:prop:u_bound}, $0\le u\le M_u$, and the mass estimates
give $\sup_{t>0}(\|v\|_{L^1}+\|w\|_{L^1})\le C_0$. It suffices to bound $v$ in
$L^\infty$; the bound on $w$ then follows by comparison in
$w_t-d_3\Delta w+\ell w=\alpha u+\gamma uv$, whose right-hand side is then
bounded. Testing the $v$-equation with $v^{p-1}$ ($p\ge2$) and integrating by
parts,
\[
\tfrac1p\tfrac{d}{dt}\|v\|_p^p+\tfrac{4d_2(p-1)}{p^2}\big\|(v^{p/2})_x\big\|_2^2
+\delta\|v\|_p^p
\le (p-1)\xi\!\int_0^L\! v^{p-1}v_x\,w_x\,dx+(\sigma_0+\sigma_1)\|v\|_{p-1}^{p-1},
\]
and, after integrating the chemotaxis term by parts and substituting
$d_3w_{xx}=w_t-\alpha u-\gamma uv+\ell w$, the leading chemotactic contribution is
controlled by $\tfrac{(p-1)\xi\gamma M_u}{p\,d_3}\int_0^L v^{p+1}\,dx$ plus lower
order terms. In one space dimension the Gagliardo--Nirenberg inequality
$\|v^{p/2}\|_{L^{2(p+1)/p}}^{2(p+1)/p}\le
C\big\|(v^{p/2})_x\big\|_2^{\theta}\|v^{p/2}\|_1^{\,\cdots}+C\|v^{p/2}\|_1^{\,\cdots}$
has exponent $\theta<2$ for every finite $p$ (the nonlinearity is
$L^p$-subcritical in dimension one), so $\int v^{p+1}$ is absorbed into the
dissipation $\big\|(v^{p/2})_x\big\|_2^2$ for each fixed $p$. This yields a
uniform bound on $\|v(\cdot,t)\|_{L^p}$ for every $p<\infty$; a Moser iteration
then gives $\sup_{t>0}\|v(\cdot,t)\|_{L^\infty}<\infty$. Global existence follows
from the blow-up alternative. This is the
standard one-dimensional chemotaxis energy method \cite{hillen2009user}, whose
closure relies precisely on the one-dimensional subcriticality just used.
\end{proof}

The higher-dimensional situation is more delicate. In two dimensions the same
approach yields unconditional boundedness when $\gamma=0$, while the fully
coupled case requires a smallness condition on the chemotactic sensitivity
relative to the diffusivities and the source strength.

\begin{remark}[Two dimensions]
\label{app:rmk:2d}
For $n=2$ the same scheme applies: with $\gamma=0$ boundedness is unconditional
(Proposition~\ref{app:prop:gamma0}), and with $\gamma>0$ the coupled estimate
closes provided the chemotactic sensitivity is small relative to the
diffusivities and $M_u$, i.e.\ $\xi<\xi_\star(d_2,d_3,\ell,\gamma,M_u,\Om)$;
local existence and the a priori bounds above hold in general. A fully
unconditional two-dimensional boundedness theorem for $\gamma>0$ remains open,
as is typical for signal-production chemotaxis systems with only linear damping
\cite{hillen2009user}. In the two-dimensional simulations of
Section~\ref{sec:results} the numerical positivity and grid-refinement
diagnostics are satisfied throughout.
\end{remark}

Having established the relevant global bounds, we next turn to the linear
stability structure that governs the homogeneous states and the onset of
spatial patterning. Linearizing at
$E_0=(0,\sigma_0/\delta,0)$ and decomposing into Neumann modes
$-\Delta\phi_k=\mu_k\phi_k$ ($0=\mu_0<\mu_1\le\cdots$) gives a block-triangular
mode matrix with eigenvalues
$\lambda_1^{(k)}=1-\sigma_0/\delta-d_1\mu_k$, $-\delta-d_2\mu_k$,
$-\ell-d_3\mu_k$; the maximum occurs at $k=0$, so $E_0$ is linearly stable iff
$\sigma_0>\delta$ (independent of $\xi$). Thus the well-mixed threshold emerges
as the homogeneous, zero-mode stability condition, while any genuinely spatial
instability must be detected through the nonzero Neumann modes.

At a coexistence equilibrium
$E_*=(u_*,1-u_*,w_*)$ the mode matrix is
\[
M_k=\begin{pmatrix}-A_k&-p&0\\ -q&-B_k&r_k\\ s&t&-C_k\end{pmatrix},\quad
\begin{array}{l}
A_k=u_*+d_1\mu_k,\;B_k=\delta+\beta u_*+d_2\mu_k,\;C_k=\ell+d_3\mu_k,\\
p=u_*,\;q=\beta(1-u_*),\;r_k=\tfrac{\sigma_1}{(1+w_*)^2}+\xi(1-u_*)\mu_k,\\
s=\alpha+\gamma(1-u_*),\;t=\gamma u_*,
\end{array}
\]
with $P_k(\lambda)=\lambda^3+a_1\lambda^2+a_2\lambda+a_3$,
$a_1=A_k+B_k+C_k$, $a_2=A_kB_k+A_kC_k+B_kC_k-pq-r_kt$,
$a_3=A_kB_kC_k-C_kpq+r_k(ps-A_kt)$.

The cubic structure makes the possible routes to spatial instability explicit.
Mode $k$ is stable iff $a_1,a_2,a_3>0$ and $a_1a_2>a_3$. Since $a_1>0$ always,
instability arises through one of two routes. If $a_3(\mu_k)<0$ then $P_k(0)<0$
and $P_k(+\infty)=+\infty$, giving a positive real root: a \emph{stationary}
instability. Otherwise $a_3\ge0$ and the determinant condition must fail,
$a_1a_2<a_3$; this is a complex-conjugate (Hopf-type) crossing precisely when, at
criticality, $a_1,a_2,a_3>0$ with $a_1a_2=a_3$, and we call it the
\emph{oscillatory} route under the standing assumption $a_2(\mu_k)>0$, verified
numerically at the unstable modes for all coexistence states reported. The
degenerate case $a_2\le0$ with $a_3>0$ is still detected by $a_1a_2<a_3$ but is
not interpreted as oscillatory.

Finally, writing $r_k=r_0+\xi(1-u_*)\mu_k$ with
$r_0=\sigma_1/(1+w_*)^2$ makes the dependence of the stationary route on
chemotactic sensitivity explicit:
$a_3(\mu_k)=\Lambda_k+\xi(1-u_*)\mu_k(ps-A_kt)$ with
$\Lambda_k=A_kB_kC_k-C_kpq+r_0(ps-A_kt)$. Hence if $A_kt-ps>0$, then for any
inhomogeneous mode the sufficient condition
$\xi>\Lambda_k/[(1-u_*)\mu_k(A_kt-ps)]$ yields $a_3(\mu_k)<0$ and a stationary
instability. At the same time, the large-mode asymptotics prevent arbitrarily
high frequencies from remaining unstable: as $\mu_k\to\infty$,
$a_3\sim d_1d_2d_3\mu_k^3$ dominates the $O(\mu_k^2)$ chemotactic term, so
high modes restabilize and the unstable band is finite.

\section{Equilibrium Formulas}\label{app:equilibria}
The homogeneous equilibria of \eqref{eq:core_pde} solve $u(1-u-v)=0$,
$\sigma_0+\sigma_1 w/(1+w)-\delta v-\beta uv=0$, $\alpha u+\gamma uv-\ell w=0$.
If $u=0$ then $w=0$ and $v=\sigma_0/\delta$, giving the unique tumor-free state
$E_0=(0,\sigma_0/\delta,0)$. If $u>0$ then $v=1-u$ with $0<u<1$ and
\[
w(u)=\frac{u[\alpha+\gamma(1-u)]}{\ell},\qquad
F(u):=\sigma_0+\sigma_1\frac{w(u)}{1+w(u)}-(\delta+\beta u)(1-u)=0 .
\]
Every root $u_*\in(0,1)$ defines $E_*=(u_*,1-u_*,w(u_*))$.

We consider endpoint values and existence. $F(0)=\sigma_0-\delta$ and
$F(1)=\sigma_0+\sigma_1\alpha/(\ell+\alpha)\ge0$. Under the mild nondegeneracy
condition
\begin{equation}\label{app:nondeg}
\sigma_0>0\quad\text{or}\quad\sigma_1\alpha>0,
\end{equation}
$F(1)>0$ strictly. If in addition $\sigma_0<\delta$ then $F(0)<0$, so by the
intermediate value theorem a root $u_*\in(0,1)$ exists, giving $u_*,v_*>0$. To
guarantee $w_*>0$ we additionally assume
\begin{equation}\label{app:wpos}
\alpha+\gamma>0 :
\end{equation}
for $0<u_*<1$ this yields $\alpha+\gamma(1-u_*)>0$ and hence
$w_*=u_*[\alpha+\gamma(1-u_*)]/\ell>0$ (if $\alpha=\gamma=0$ then $w(u)\equiv0$
and the coexistence state has $w_*=0$). The sufficient conditions for a
tumor-positive coexistence equilibrium are therefore
\[
\sigma_0<\delta,\qquad
\sigma_0>0\ \text{or}\ \sigma_1\alpha>0,\qquad
\alpha+\gamma>0;
\]
\eqref{app:nondeg} excludes only the degenerate case of no baseline and no
tumor-induced chemokine recruitment. For
root finding, $w'(u)=(\alpha+\gamma-2\gamma u)/\ell$ and
$F'(u)=\sigma_1 w'(u)/(1+w(u))^2-\beta+\delta+2\beta u$; strict monotonicity of
$F$ on $(0,1)$ implies a unique root.

\section{Numerical Implementation}\label{app:numerics}

All numerical simulations use conservative spatial discretizations designed to
respect the flux structure of the reaction--diffusion--chemotaxis system and to
preserve nonnegativity in the presence of steep chemotactic gradients. In one
space dimension, we discretize $\Om=(0,L)$ into $N$ finite-volume cells with
$\Delta x=L/N$. For a diffusing variable $z$, the diffusive flux at the
interface $i+1/2$ is
$F^{\rm diff}_{z,i+1/2}
=-D_z(z_{i+1}-z_i)/\Delta x$,
with zero boundary fluxes corresponding to the homogeneous Neumann conditions.
The chemotactic contribution is treated conservatively through
$F^{\rm chem}_{i+1/2}
=\xi\,v_{i+1/2}(w_{i+1}-w_i)/\Delta x$,
where first-order upwinding is used,
\[
v_{i+1/2}=
\begin{cases}
v_i, & w_{i+1}\ge w_i,\\
v_{i+1}, & w_{i+1}<w_i,
\end{cases}
\]
together with zero boundary chemotactic flux. This choice suppresses spurious
oscillations and improves preservation of nonnegativity when chemokine
gradients become sharp. The resulting semi-discrete system is integrated with a
stiff, adaptive BDF solver using relative and absolute tolerances
$10^{-8}$ and $10^{-10}$, respectively, with tighter tolerances for strongly
chemotactic runs. The sparse Jacobian pattern associated with the local
finite-volume stencil is supplied to the solver to improve efficiency.

For simulations with prescribed boundary chemokine delivery
$w|_{\partial\Om}=B_w$, the boundary condition is incorporated through ghost
cells. At the left boundary we set
$w_0=2B_w-w_1$, and analogously at the right boundary, so that the boundary-face
value is exactly $B_w$; the variables $u$ and $v$ retain their homogeneous
no-flux conditions. This construction allows the boundary-driven chemokine
experiments to be implemented within the same conservative finite-volume
framework as the baseline simulations.

The two-dimensional simulations use an IMEX discretization on
$\Om=(0,L)^2$ with an $N\times N$ grid. Diffusion is treated implicitly,
whereas reaction terms and conservative upwind chemotaxis are treated
explicitly. The cell-centered finite-volume Neumann Laplacian is diagonalized
by the discrete cosine transform (DCT-II), so each implicit diffusion solve
$(I-\Delta t\,D\,\Delta_h)$
is performed in DCT space by division by
$1+\Delta t\,D(\lambda_p+\lambda_q)$, with
$\lambda_p=\frac{2}{\Delta x^2}
\left(1-\cos\frac{\pi p}{N}\right)\ge0$.
The two-dimensional chemotactic divergence is evaluated using the corresponding
upwind fluxes with zero boundary fluxes. Treating diffusion implicitly therefore
removes the parabolic diffusion stability restriction, leaving the admissible
time step primarily limited by the explicit chemotaxis and reaction terms.

To assess numerical reliability, every simulation records
$m_{\min}=\min_{x,t}\{u,v,w\}$,
and a run is accepted only if $m_{\min}\ge-10^{-9}$. Spatial refinement is
checked by comparing $N$ with $2N$ in one dimension and $N^2$ with $(2N)^2$ in
two dimensions. A diagnostic quantity $Q$ is regarded as grid-stable when
$|Q_{2N}-Q_N|/(1+|Q_{2N}|)<0.01$.
All reported simulations satisfy this criterion. For example, the interior
refuge mass in Section~\ref{sec:results} changes by only
$2.8\times10^{-3}$ under the stated refinement, while the two-dimensional
exclusion index changes by less than $0.1\%$. Linear modal growth rates are
validated independently by perturbing the homogeneous equilibrium with a single
Neumann mode whose spatial profile is multiplied by the corresponding
eigenvector of $M_k$, and fitting the modal energy during the linear-growth
regime. For the oscillatory instability, the same modal projection is used to
extract the dominant frequency, allowing both the real growth rate and the
imaginary part of the predicted eigenvalue to be tested directly against
simulation.

In addition to refinement diagnostics, we verify the spatial order of the
discretization by self-convergence against a fine reference grid with
$N=1024$, using the $L^2$ error of $u$ at a fixed time
(Table~\ref{tab:convergence}). In a diffusion-dominated regime
($\xi=0$) with a smooth solution, the observed convergence is second-order,
confirming the expected accuracy of the conservative finite-volume diffusion
discretization. When chemotaxis is active ($\xi=20$) and steep aggregation
fronts develop, the observed order decreases toward first order, consistent
with the first-order upwind treatment of the chemotactic flux; first-order upwinding is the deliberate tradeoff that preserves nonnegativity of the immune density in the presence of sharp chemotactic gradients.

\begin{table}[htbp]
\centering
\caption{Spatial self-convergence ($L^2$ error of $u$, reference $N=1024$).}
\label{tab:convergence}
\renewcommand{\arraystretch}{1.15}
\begin{tabular}{lcccc}
\toprule
& \multicolumn{2}{c}{diffusion-dominated ($\xi=0$)}
& \multicolumn{2}{c}{with chemotaxis ($\xi=20$)}\\
\cmidrule(lr){2-3}\cmidrule(lr){4-5}
$N$ & $L^2$ error & order & $L^2$ error & order\\
\midrule
$64$  & $3.6\times10^{-7}$ & --   & $8.8\times10^{-5}$ & --\\
$128$ & $8.9\times10^{-8}$ & $2.02$ & $4.2\times10^{-5}$ & $1.07$\\
$256$ & $2.1\times10^{-8}$ & $2.07$ & $1.8\times10^{-5}$ & $1.21$\\
$512$ & $4.2\times10^{-9}$ & $2.32$ & $6.1\times10^{-6}$ & $1.58$\\
\bottomrule
\end{tabular}
\end{table}

Together, these discretization, solver, and validation procedures ensure that
the spatial patterns, instability rates, treatment divergences, and biomarker
values reported in Section~\ref{sec:results} are not artifacts of the spatial
mesh or time integration. In particular, the convergence results clarify the
distinct numerical roles of diffusion and chemotaxis: the diffusive component
retains second-order accuracy in smooth regimes, whereas the chemotactic
component is intentionally treated with a positivity-preserving first-order
flux to remain robust in the aggregation-dominated regimes central to the
present study.

\section{Parameter summary}\label{app:parameters}
Table~\ref{tab:core_parameters} lists core nondimensional parameters and their interpretations.

\begin{table}[htbp]
\centering
\caption{Core nondimensional parameters and their interpretation.}
\label{tab:core_parameters}
\renewcommand{\arraystretch}{1.15}
\begin{tabular}{p{0.07\textwidth}p{0.40\textwidth}p{0.43\textwidth}}
\toprule
\textbf{Param.} & \textbf{Biological meaning} & \textbf{Intervention reading}\\
\midrule
$d_1$ & Tumor-cell motility & Invasion/ECM-mediated spread\\
$d_2$ & Random effector motility & Immune mobility/tissue permissiveness\\
$d_3$ & Chemokine spread & Localized vs.\ diffuse chemokine availability\\
$\xi$ & Chemotactic sensitivity & Strength of chemokine-guided migration\\
$\sigma_0$ & Baseline effector recruitment & Immune support / adoptive input\\
$\sigma_1$ & Chemokine-dependent recruitment & Recruitment efficiency to chemokine cues\\
$\delta$ & Effector turnover & Persistence / exhaustion (reduce $\delta$)\\
$\beta$ & Tumor-associated immune attrition & Protection from tumor-induced loss (reduce $\beta$)\\
$\alpha$ & Tumor-induced chemokine production & Chemokine restoration (increase $\alpha$)\\
$\gamma$ & Interaction-induced chemokine production & Inflammatory amplification\\
$\ell$ & Chemokine clearance & Chemokine stabilization (reduce $\ell$)\\
\bottomrule
\end{tabular}
\end{table}

Extended-model parameters include: $d_4$ (suppressive-signal spread), $\eta$ (suppression
strength; reduce to model blockade), $a_s,b_s$ (tumor- and interaction-induced
suppression), $\ell_s$ (suppression decay), and $m$ (chemotaxis-saturation
exponent in $\chi(w)=\xi/(1+w)^m$).

\bibliographystyle{unsrt}
\bibliography{2reference}

\end{document}